\documentclass[11pt]{amsart}

\usepackage[a4paper,margin=1.05in]{geometry}
\usepackage[T1]{fontenc}
\usepackage[utf8]{inputenc}
\usepackage{lmodern}
\usepackage{amsmath,amssymb,amsthm,mathtools}
\usepackage{microtype}
\usepackage{enumitem}
\usepackage{graphicx}
\usepackage{booktabs}
\usepackage{url}
\usepackage{hyperref}
\usepackage{float}

\hypersetup{
  colorlinks=true,
  linkcolor=blue,
  citecolor=blue,
  urlcolor=blue,
  hypertexnames=false
}

\newcommand{\codeRepo}{\url{https://github.com/tomaszkania/gauss-lobatto-bounds}}
\newcommand{\codeVersion}{v1.1.0}

\newtheorem{theorem}{Theorem}[section]
\newtheorem{proposition}[theorem]{Proposition}
\newtheorem{lemma}[theorem]{Lemma}
\newtheorem{corollary}[theorem]{Corollary}
\theoremstyle{definition}
\newtheorem{definition}[theorem]{Definition}
\theoremstyle{remark}
\newtheorem{remark}[theorem]{Remark}

\numberwithin{equation}{section}

\newcommand{\R}{\mathbb{R}}
\newcommand{\I}{\mathcal{I}}

\newcommand{\dd}{\,\mathrm{d}}
\newcommand{\abs}[1]{\left|#1\right|}
\newcommand{\bracks}[1]{\left[#1\right]}

\renewcommand{\le}{\leqslant}
\renewcommand{\ge}{\geqslant}

\title{Refined Gauss--Lobatto bounds for odd-order convexity}

\author[T.~Kania]{Tomasz Kania}
\address[T.~Kania]{Mathematical Institute\\Czech Academy of Sciences\\v{Z}itn\'a 25 \\115 67 Praha 1\\Czech Republic and Institute of Mathematics and Computer Science\\ Jagiellonian University\\ {\L}ojasiewicza 6, 30-348 Krak\'{o}w, Poland}
\email{kania@math.cas.cz, tomasz.marcin.kania@gmail.com}

\author[Sz.~W\k{a}sowicz]{Szymon W\k{a}sowicz}
\address[Sz.~W\k{a}sowicz]{Department of Mathematics\\
 University of Bielsko--Bia\l{}a\\
 Willowa 2, 43--309 Bielsko--Bia\l{}a, Poland}
\email{swasowicz@ubb.edu.pl} 

\thanks{IM CAS (RVO 67985840). }
\subjclass[2020]{Primary 65D30; Secondary 26A51, 41A55, 42C05, 65D32}

\keywords{Gaussian quadrature, Gauss--Lobatto quadrature, Gauss--Radau quadrature,
higher-order convexity, Peano kernel, certified adaptive integration,
a posteriori error bounds, Christoffel numbers}
\begin{document}

\begin{abstract}
For functions that are convex of higher order, classical extremalities identify Gaussian quadrature
as a lower bound and Gauss--Lobatto quadrature as an upper bound for the integral.
We sharpen this bracket in every odd order: if $f$ is $(2n-1)$-convex on $[-1,1]$, then the exact integral lies
not merely between the $n$-point Gauss--Legendre rule $G_n[f]$ and the $(n+1)$-point Gauss--Lobatto rule $L_{n+1}[f]$,
but already between $G_n[f]$ and their midpoint.
This yields the certified estimator
\[
Q_n[f] := \frac{3}{4}G_n[f]+\frac{1}{4}L_{n+1}[f],
\qquad
\abs{\I[f]-Q_n[f]}\le \frac{1}{4}\abs{L_{n+1}[f]-G_n[f]},
\]
and the constant $\tfrac14$ is sharp.
We also record a complementary phenomenon in even order:
for $2n$-convex functions the Gauss--Radau endpoint rules still bracket the integral,
but the midpoint of the two Radau rules cannot yield a one-sided refinement.
The obstruction is explained by an odd, sign-changing Radau quadrature kernel.
Finally, we distinguish numerical certification from ordinary high-order approximation:
we include concrete examples from spline/moment/statistical models and report experiments for truncated powers and near-pole Stieltjes kernels,
where the shape certificate remains meaningful when classical smoothness-based error constants are unavailable or severely pessimistic.
\end{abstract}

\maketitle

\section{Introduction}

Higher-order convexity is useful precisely because it combines a qualitative shape assumption with quantitative consequences.
It occurs in several concrete settings.
In spline approximation, truncated powers
\((x-\tau)_+^r\) form the classical truncated-power basis for polynomial splines, and integrals of such functions enter mass, load, smoothing, and normalisation terms; see, for example, de~Boor~\cite{deBoor2001}.
In moment problems and the theory of Tchebycheff systems, the same divided-difference language is natural and kernels such as \((x+\lambda)^{-1}\) or Stieltjes--Markov transforms have alternating derivatives of all orders; see Karlin--Studden~\cite{KarlinStudden1966}.
In shape-constrained statistics, convex regression and density estimation, and more generally $k$-monotone density estimation, lead to spline estimators and integrated spline processes; see Groeneboom--Jongbloed--Wellner~\cite{GroeneboomJongbloedWellner2001} and Balabdaoui--Wellner~\cite{BalabdaouiWellner2007}.
Thus the relevant integrands are not always analytic test functions; they may be piecewise polynomial, have a high-order kink, or possess a nearby pole outside the interval while retaining a fixed higher-order shape.

In numerical integration, a shape assumption can be turned into a rigorous stopping rule.
This point of view is classical in the Peano-kernel theory of definite quadrature formulae; see the survey of F\"orster~\cite{Foerster1993}.
For the unweighted integral
\[
\I[f]=\int_{-1}^1 f(x)\,\mathrm{d}x,
\]
the starting point is the classical extremality of Gauss and Gauss--Lobatto quadratures on the cone of $(2n-1)$-convex functions:
\[
G_n[f]\le \I[f]\le L_{n+1}[f].
\]
For $n=1$, this is the Hermite--Hadamard inequality, with $G_1$ the midpoint rule and $L_2$ the trapezoidal rule.
For higher $n$, the bounds are less familiar but remarkably rigid: they hold for an entire cone of integrands and not for one smoothness scale chosen in advance.

This paper strengthens the bracket in every odd order.
The integral is always closer to the Gauss side:
\[
G_n[f]\le \I[f]\le \frac{G_n[f]+L_{n+1}[f]}{2}.
\]
Taking the centre of this refined interval yields
\[
Q_n[f]=\frac34G_n[f]+\frac14L_{n+1}[f],
\qquad
\abs{\I[f]-Q_n[f]}\le \frac14\abs{L_{n+1}[f]-G_n[f]}.
\]
The constant $1/4$ is sharp for this estimator.

We also explain why the analogous hope fails in even order.
There, the natural bracketing rules are the two Gauss--Radau endpoint formulas.
Although they still trap the integral for $2n$-convex functions, their midpoint does not consistently lie above or below the integral.
We make this failure precise by studying the associated Peano kernel, which is odd and sign-changing.
This is the kernel version of the familiar obstruction that a definite quadrature formula of odd Peano order cannot be symmetric.

For analytic functions on short intervals, ordinary high-order Gauss--Legendre quadrature can of course be extremely efficient when one measures actual error against a known exact value.
The numerical discussion below therefore separates approximation from certification.
The smooth tests $1/x$, $e^x$, and $\log(1+x)$ are retained only as sanity checks in the companion repository.
The paper itself focuses on the intended use case: certified integration under a higher-order shape constraint, especially for truncated powers and near-pole Stieltjes kernels where classical smoothness-based Gauss error constants are unavailable or very pessimistic.

From a computational perspective, Gauss--Lobatto nodes and weights are standard ingredients in spectral methods and collocation schemes, and stable procedures for their computation are well developed; see Gautschi's work on high-order Gauss--Lobatto and generalised Gauss--Radau/Lobatto rules \cite{Gautschi2000,Gautschi2004}, as well as applications in optimal control and fractional problems \cite{Williams2006,EsmaeiliMilovanovic2014}.
Our focus is different: we treat these rules as certification devices for integrals of higher-order convex functions.
The refined bracket hinges on a pointwise dominance of Peano kernels for the Gauss/Gauss--Lobatto pair.
For Clenshaw--Curtis-type rules based on Chebyshev points, or for Szeg\H{o}--Lobatto rules on the unit circle \cite{JagelsReichel2007}, we are not aware of an analogous dominance principle.

\paragraph{\emph{Organisation}.}
Each section begins with a short road map.  Section~\ref{sec:hoc} recalls higher-order convexity and a representation theorem.
Section~\ref{sec:quadrature} fixes the quadrature rules.
Section~\ref{sec:odd} contains the refined Gauss--Lobatto inequality and its consequences.
Section~\ref{sec:estimators} develops the certified estimator $Q_n$, proves sharpness, and records the optimality of the interval centre.
Section~\ref{sec:smooth} discusses the smooth/analytic regime and a degree-raising combination.
Section~\ref{sec:even} introduces the Radau quadrature kernel and proves the even-order obstruction.
Section~\ref{sec:numerics} presents numerical experiments and points to a companion notebook.

\section{Higher-order convexity}\label{sec:hoc}

Higher-order convexity can be defined in several equivalent ways; we choose a version that interacts cleanly with Peano kernels.
The key point is that the cone of higher-order convex functions is generated by truncated power functions plus polynomials,
which is exactly what we need for quadrature comparisons.

\subsection{Definitions and basic properties}

To keep the exposition concrete, we work on $[-1,1]$ and the unweighted integral, but the affine invariance of divided differences
means that all statements translate immediately to a general interval $[a,b]$. We write $\Pi_k$ for the space of real polynomials of degree at most $k$.

\begin{definition}[Higher-order convexity]\label{def:higherconv}
Let $m\in\mathbb{N}$ and let $f:[-1,1]\to\R$.
We say that $f$ is \emph{$(m-1)$-convex} if all divided differences of order $m$ are non-negative, that is,
\[
[x_0,\dots,x_m;f]\ge 0
\quad\text{for all distinct }x_0,\dots,x_m\in[-1,1].
\]
If all such divided differences are nonpositive, we call $f$ \emph{$(m-1)$-concave}.
\end{definition}

\begin{remark}
This recovers ordinary convexity for $m=2$.
If $f\in C^m([-1,1])$, then $f$ is $(m-1)$-convex whenever $f^{(m)}\ge 0$ on $[-1,1]$
(a classical consequence of the mean-value form of divided differences).
\end{remark}

\begin{remark}
For later use, and as a reminder, we recall an equivalent recursive description.
A~function $f\colon I\to\mathbb{R}$ is $(m-1)$-convex on an interval $I$ if and only if
for every fixed $x_0\in I$ the function
\[
x \longmapsto [x,x_0,\dots,x_0;f]
\]
(the divided difference with $m$ arguments, where $x_0$ is repeated $m-1$ times)
is non-decreasing on $I\setminus\{x_0\}$.
Equivalently, $f$ is $(m-1)$-convex if and only if its divided differences of order
$m-1$ are convex functions in each variable. In particular, if $f\in C^{m}(I)$, this condition is equivalent to the pointwise
inequality $f^{(m)}\ge 0$ on $I$.
\end{remark}

The notion of higher-order convexity originates in the classical work of Hopf \cite{Hopf1926} and
Popoviciu \cite{Popoviciu1944}.
Popoviciu systematically introduced convexity of arbitrary order via divided
differences and established its fundamental properties, while Hopf studied related
monotonicity phenomena for higher derivatives.
Modern treatments of higher-order convexity are all rooted in these foundational
ideas.

\subsection{Popoviciu-type representation}

The next result is the technical bridge between higher-order convexity and Peano kernels.
We include it with a proof because it is the mechanism that allows us to test inequalities on a one-parameter family
of extremal functions.

\begin{theorem}[Representation by truncated powers]\label{thm:popoviciu}
Let $m\in\mathbb{N}$ and let $f:[-1,1]\to\R$ be $(m-1)$-convex.
Then there exist a polynomial $p\in\Pi_{m-1}$ and a non-negative Borel measure $\mu$ on $[-1,1]$ such that
\begin{equation}\label{eq:popoviciu}
f(x) = p(x) + \int_{-1}^1 (x-t)_+^{m-1}\,\mu(\dd t),
\qquad x\in[-1,1],
\end{equation}
where $(x-t)_+ := \max\{x-t,0\}$.
If $f$ is $(m-1)$-concave, the same holds with $\mu$ nonpositive.
\end{theorem}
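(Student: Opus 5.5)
The plan is to reduce the statement to the classical structure theory of $(m-1)$-convex functions and then apply Taylor's formula with integral remainder in Stieltjes form. For $m=1$ the hypothesis says that $f$ is non-decreasing. Then $f(x)=f(-1)+\int_{(-1,x]}\mu(\dd t)$ with $\mu$ the Lebesgue--Stieltjes measure of $f$, and $p=f(-1)$ is constant. Two details need care here. One is the convention for $(x-t)_+^0$ at $x=t$, which must be $1$ so that jumps are attributed to the right points. The other is right-continuity: at interior jump points the representation forces right-continuity, so either we note that values at jumps are fixed by this convention, or we check that the stated convention matches the intended one. For $m\ge 2$ I would first invoke the classical regularity results of Popoviciu~\cite{Popoviciu1944}, proved by induction on $m$ using the recursive description in the second remark after Definition~\ref{def:higherconv}. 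These give that $f\in C^{m-2}$ on $(-1,1)$, that $f^{(m-2)}$ is convex there, and hence that $g:=f^{(m-1)}_+$, the right derivative of $f^{(m-2)}$, exists everywhere in $(-1,1)$ and is non-decreasing.

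Next I fix an interior base point $c\in(-1,1)$ and let $\nu$ be the Lebesgue--Stieltjes measure $\dd g$ on $(-1,1)$. The key identity is Taylor's formula for $x\ge c$,
\[
f(x)=\sum_{k=0}^{m-2}\frac{f^{(k)}(c)}{k!}(x-c)^k+\frac{g(c)}{(m-1)!}(x-c)^{m-1}+\frac1{(m-1)!}\int_{(c,x]}(x-t)^{m-1}\,\nu(\dd t).
\]
It follows by integrating $f^{(m-2)}(x)=f^{(m-2)}(c)+\int_c^x g$ a further $m-2$ times and applying Fubini to $g(s)=g(c)+\nu((c,s])$. A symmetric formula holds for $x<c$. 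Combining the two, and using that $(x-t)^{m-1}$ restricted to $t<c$ differs from $(x-t)_+^{m-1}$ by a polynomial in $x$ when the mass is finite, I would obtain \eqref{eq:popoviciu} on compact subintervals $[-1+\varepsilon,1-\varepsilon]$ with $\mu=\nu/(m-1)!$. The polynomial part is then absorbed into $p\in\Pi_{m-1}$. The concave case follows by applying the result to $-f$.

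The main obstacle is the passage to the closed interval $[-1,1]$. Near the endpoints $g$ may be unbounded, so $\nu$ may have infinite mass; an example is $f(x)=-\sqrt{1-x}$ with $m=2$. In addition, $f$ may jump at $\pm1$, since convexity only constrains boundary values by one-sided inequalities. My first attempt would be to anchor the expansion at the left endpoint. For this I would show that $\int_{(-1,c]}(t+1)^{m-2}\,\nu(\dd t)<\infty$ and that the Taylor coefficients have finite limits as $c\downarrow-1$, using the boundedness of the divided differences of $f$ that involve $-1$ and the monotone behaviour from the remark. The representation with kernel $(x-t)_+^{m-1}$ then extends to $t$ near $-1$. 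Near $t=1$ the kernel $(x-t)_+^{m-1}$ vanishes to order $m-1$ at $x=t$, so a possibly infinite $\nu$ still gives a finite integral by monotone convergence. This is exactly what happens in the $\sqrt{1-x}$ example, where $\int(1-t)^{m-1}\nu(\dd t)<\infty$ follows from the finiteness of $f(1)$. Boundary jumps of $f$ at $x=1$ would be encoded as point masses at $t=1$ only if the kernel convention permits it. If it does not, I would restrict the claim to functions continuous at the endpoints, or record the jump as an additional admissible extremal. For the quadrature application only the one-parameter family $(x-t)_+^{m-1}$ and limits thereof is tested, so this boundary subtlety does not affect later use.
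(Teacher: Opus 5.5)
Your interior argument (Popoviciu's regularity, $g=f^{(m-1)}_+$ non-decreasing, and the Stieltjes--Taylor formula on compact subintervals) is sound. It is a more elementary substitute for the paper's distributional step $D^m f=\nu\ge0$. The step that fails is the left endpoint. You propose to show that $\int_{(-1,c]}(1+t)^{m-2}\,\nu(\dd t)<\infty$ and that the Taylor coefficients at $c$ have finite limits as $c\downarrow-1$; both claims are false. Take the mirror image of your own example: $m=2$ and $f(x)=-\sqrt{1+x}$, which is continuous and convex on $[-1,1]$. Here $g(c)=-\tfrac12(1+c)^{-1/2}\to-\infty$, and $\nu=\tfrac14(1+t)^{-3/2}\,\dd t$ has infinite mass near $-1$. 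The divided differences $[-1,x;f]=-(1+x)^{-1/2}$ involving $-1$ are unbounded, so there is no boundedness to exploit. The weight actually controlled near $-1$ is $(1+t)^{m-1}$, the same as at the right end: here $\int(1+t)\,\nu(\dd t)<\infty$ but $\nu((-1,c])=\infty$. That weight is too weak for a kernel anchored at $-1$.

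No repair of this step is possible, because the statement itself fails for this $f$. If $f=p+\int(\cdot-t)_+\,\mu(\dd t)$, finiteness at $x=0$ forces $\mu([-1,-\tfrac12])\le 2\bigl(f(0)-p(0)\bigr)<\infty$. Then the right-hand side is Lipschitz on $[-1,-\tfrac12]$, whereas $-\sqrt{1+x}$ is not. The same happens for every $m\ge2$, in particular $m=2n$, with $f(x)=-(1+x)^{\alpha}$ and $m-2<\alpha<m-1$. Then $f^{(m)}>0$, but $f^{(m-2)}$ is not Lipschitz at $-1$, while any representation of the stated form makes it Lipschitz there.

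The boundary issues you flagged are likewise genuine counterexamples, not matters of convention. For $m=2$ the function $\mathbf{1}_{\{1\}}$ is convex, but every right-hand side of the stated form is finite and left-continuous at $x=1$, by monotone convergence in $(x-t)_+\uparrow(1-t)_+$. For $m=1$ a non-decreasing function need not be right-continuous, so the convention $(x-t)_+^0=\mathbf{1}_{x\ge t}$ does not rescue that case. Your instinct to restrict the claim is right, but the restriction must be applied at both ends, not only at $x=1$.

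The paper's proof has exactly the same hole, only less visibly. It treats $\nu$ as a finite measure on $[-1,1]$; for $-\sqrt{1+x}$ its $u_0$ is $+\infty$ on $(-1,1]$. It also passes from $D^m(f-u_0)=0$ on $(-1,1)$ to agreement with a polynomial on the closed interval. A correct version has two options:
\begin{enumerate}[label=(\roman*)]
\item assume $\nu$ finite and $f$ continuous at $\pm1$; or
\item anchor at an interior point $c$, use for $t<c$ the kernel $(x-t)_+^{m-1}-(x-t)^{m-1}=(-1)^m(t-x)_+^{m-1}$, and add endpoint terms $a\mathbf{1}_{\{-1\}}+b\mathbf{1}_{\{1\}}$ for the jumps, with $a,b\ge0$ when $m$ is even.
\end{enumerate}
The functionals in Remark~\ref{rem:reduction} annihilate $\Pi_{m-1}$, so they take the same values on both kernels. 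Moreover $E^G_n[\mathbf{1}_{\{\pm1\}}]=0\le E^L_n[\mathbf{1}_{\{\pm1\}}]$. Hence the proof of Theorem~\ref{thm:refined} goes through with this corrected representation.
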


\begin{proof}
We first treat the smooth case, which already contains the essential idea.
Assume $f\in C^m([-1,1])$ and $f^{(m)}\ge 0$.
Define
\[
\mu(\dd t) := \frac{f^{(m)}(t)}{(m-1)!}\,\dd t,
\qquad
p(x) := \sum_{k=0}^{m-1}\frac{f^{(k)}(-1)}{k!}(x+1)^k.
\]
Repeated integration of $f^{(m)}$ shows that, for every $x\in[-1,1]$,
\[
f(x)-p(x)
= \int_{-1}^x \frac{(x-t)^{m-1}}{(m-1)!}f^{(m)}(t)\,\dd t
= \int_{-1}^1 (x-t)_+^{m-1}\,\mu(\dd t),
\]
which is \eqref{eq:popoviciu}.

For a general $(m-1)$-convex $f$ (not assumed smooth), we use the standard distributional form of the Popoviciu--Boas--Widder representation.
In this form, non-negativity of divided differences of order $m$ is equivalent to the existence of a non-negative Borel measure $\nu$ such that
\[
D^m f=\nu
\]
in the sense of distributions on $(-1,1)$; see Popoviciu~\cite{Popoviciu1944}, Boas--Widder~\cite{BoasWidder1940}, and the integral characterisation of higher-order convexity in Bessenyei--P\'ales~\cite{BessenyeiPales2010}.
Define
\[
u_0(x):=\int_{-1}^1\frac{(x-t)_+^{m-1}}{(m-1)!}\,\nu(\dd t).
\]
Then $D^m u_0=\nu$, because $(x-t)_+^{m-1}/(m-1)!$ is the one-sided fundamental solution of the $m$-th derivative.
Consequently $D^m(f-u_0)=0$, and therefore $f-u_0$ agrees on $[-1,1]$ with a polynomial $p\in\Pi_{m-1}$.
Writing $\mu:=\nu/(m-1)!$ gives exactly \eqref{eq:popoviciu}.
The concave case follows by applying the convex case to $-f$.\end{proof}

\begin{remark}\label{rem:reduction}
The representation \eqref{eq:popoviciu} implies a reduction principle:
if a linear functional $\Lambda$ annihilates $\Pi_{m-1}$ and satisfies
$\Lambda[(\cdot-t)_+^{m-1}]\ge 0$ for all $t\in[-1,1]$, then $\Lambda[f]\ge 0$ for every $(m-1)$-convex $f$.
We will use this repeatedly with $\Lambda$ equal to differences of quadrature operators.
\end{remark}

\section{Gauss-type quadratures on \texorpdfstring{$[-1,1]$}{[-1,1]}}\label{sec:quadrature}

This section fixes notation for the quadrature rules and records the few structural facts we will need.
The main reason to be explicit is that later we will compare rules via their Peano kernels.

\subsection{The integral operator and three families of rules}

We write
\[
\I[f] := \int_{-1}^1 f(x)\,\dd x.
\]

\paragraph{\bf Gauss--Legendre.}
For $n\ge 1$, the $n$-point Gauss--Legendre rule is
\[
G_n[f] := \sum_{i=1}^n w_i\, f(x_i),
\]
where $x_i$ are the zeros of the Legendre polynomial $P_n$ and $w_i>0$ are the associated Christoffel numbers.
It is exact on $\Pi_{2n-1}$.

\paragraph{\bf Gauss--Lobatto.}
For $n\ge 1$, the $(n+1)$-point Gauss--Lobatto rule is
\[
L_{n+1}[f] := \sum_{i=0}^n v_i\, f(y_i),
\qquad
-1=y_0<y_1<\dots<y_{n-1}<y_n=1,
\]
where $y_1,\dots,y_{n-1}$ are the zeros of $P_n'$ and the weights $v_i>0$ are chosen so that $L_{n+1}$ is exact on $\Pi_{2n-1}$.
The endpoint weights satisfy $v_0=v_n=\frac{2}{n(n+1)}$.
Computation of nodes and weights is classical; see \cite{Gautschi2000,Gautschi2004}.\smallskip

\paragraph{\bf Gauss--Radau.}
For $n\ge 1$, the left and right Gauss--Radau rules with $(n+1)$ nodes are
\[
R^{\ell}_{n+1}[f] := \alpha_0 f(-1)+ \sum_{i=1}^n \alpha_i f(\xi_i),
\qquad
R^{r}_{n+1}[f] := \sum_{i=0}^{n-1}\beta_i f(\eta_i)+ \beta_n f(1),
\]
where the node sets include $-1$ and $1$, respectively, and each rule is exact on $\Pi_{2n}$.
We will only need them in Section~\ref{sec:even}.

\paragraph{\bf Gauss-type structural facts.}
We shall use the following standard facts in the classical Legendre case.
An $N$-node Gauss-type quadrature formula is one with algebraic degree of precision at least $2N-2$; the ordinary Gauss rule and the left and right Radau rules are the standard representatives, with the unrestricted Gauss rule having the still higher precision $2N-1$.
Such formulae have positive weights, and the nodes of any two members of the same Gauss-type family interlace.
The Lobatto rule is the corresponding principal formula obtained by prescribing both endpoints.
For the pair used below, $G_n$ and $L_{n+1}$ have the same algebraic degree of precision,
\[
   \operatorname{ADP}(G_n)=\operatorname{ADP}(L_{n+1})=2n-1.
\]
Thus the positivity of $G_n$ and $L_{n+1}$, the interlacing of the zeros of $P_n$ and $P'_n$, and the definite Peano-kernel comparison are not independent assumptions; they are consequences of this common Gauss-type structure.
We refer to Freud~\cite{Freud1971} for these classical facts.

\subsection{Affine changes of variables}

It is worth stating explicitly that all inequalities proved on the reference
interval $[-1,1]$ transfer to any finite interval $[a,b]$ by an affine change of
variables.
If
\[
T(x)=\frac{a+b}{2}+\frac{b-a}{2}\,x,
\]
then
\[
\int_a^b f(t)\,\dd t = \frac{b-a}{2}\int_{-1}^1 f(T(x))\,\dd x,
\]
and the Gauss, Gauss--Lobatto, and Gauss--Radau quadrature rules on $[a,b]$ are
obtained by applying the same affine rescaling to the nodes and multiplying the
weights by the factor $(b-a)/2$.

Concerning higher-order convexity, the behaviour under affine transformations
depends on the parity of the order.
If $f$ is $(m-1)$-convex on $[a,b]$ in the divided--difference sense and
$T(x)=ax+b$ with $a\neq 0$, then:
\begin{itemize}
\item if $m-1$ is \emph{odd} (that is, $m$ is even), the composition $f\circ T$
      is again $(m-1)$-convex for every $a\neq 0$;
\item if $m-1$ is \emph{even} and $a<0$, the composition $f\circ T$ is
      $(m-1)$-\emph{concave}.
\end{itemize}

This parity effect reflects the fact that, for even orders, the highest divided
difference (or, equivalently, the highest derivative when it exists) changes sign
under orientation reversal.
Since the present paper is concerned exclusively with odd-order convexity
$(2n-1)$, no distinction between increasing and decreasing affine maps is needed
in the sequel.

\section{Odd order: a refined Gauss--Lobatto inequality}\label{sec:odd}

The goal of this section is to explain why, for odd-order convexity, the integral is closer to the Gauss side than the
classical bracket suggests.
After recalling the known extremalities, we prove the refinement and discuss its immediate consequences.

\subsection{Classical extremalities}

The following theorem is a higher-order analogue of Hermite--Hadamard.
It is due to Bessenyei and Páles \cite{BessenyeiPales2002} in the framework of generalised convexity.

\begin{theorem}[Gauss and Lobatto bracket]\label{thm:classic-bracket}
Let $n\ge 1$ and let $f:[-1,1]\to\R$ be $(2n-1)$-convex.
Then
\[
G_n[f]\le \I[f]\le L_{n+1}[f].
\]
If $f$ is $(2n-1)$-concave, the inequalities reverse.
\end{theorem}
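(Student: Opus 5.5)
We reduce the statement to a sign property of Peano kernels via Theorem~\ref{thm:popoviciu} and Remark~\ref{rem:reduction}, with $m=2n$. Both functionals $\Lambda_1:=\I-G_n$ and $\Lambda_2:=L_{n+1}-\I$ annihilate $\Pi_{2n-1}$, since both rules are exact there. So it suffices to show
\[
K_G(t):=\I[(\cdot-t)_+^{2n-1}]-G_n[(\cdot-t)_+^{2n-1}]\ge 0,
\qquad
K_L(t):=L_{n+1}[(\cdot-t)_+^{2n-1}]-\I[(\cdot-t)_+^{2n-1}]\ge 0
\]
for all $t\in[-1,1]$. The concave case then follows by applying the result to $-f$.

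For the Gauss kernel I would use the classical Hermite interpolation argument rather than study $K_G$ directly. Fix $t$ and set $g=(\cdot-t)_+^{2n-1}$. Let $h\in\Pi_{2n-1}$ be the Hermite interpolant of $g$ at the double nodes $x_1,\dots,x_n$. Then $G_n[g]=G_n[h]=\I[h]$, so $K_G(t)=\I[g-h]$. By the divided-difference form of the remainder,
\[
g(x)-h(x)=[x,x_1,x_1,\dots,x_n,x_n;g]\prod_i(x-x_i)^2,
\]
and the divided difference of order $2n$ is nonnegative. Here I would use $(2n-1)$-convexity of $g$, which is immediate since $g^{(2n)}$ is a positive point mass at $t$. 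Confluent divided differences are limits of nonnegative ones, so they are nonnegative as well. Hence $g-h\ge 0$ and $K_G\ge0$. In fact this argument works directly for any $(2n-1)$-convex $f$, provided $f$ has one-sided derivatives at the nodes. To avoid that regularity issue, it is cleaner to run it only on truncated powers and then invoke Remark~\ref{rem:reduction}.

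For the Lobatto kernel I would mirror the argument. Let $h\in\Pi_{2n-1}$ interpolate $g$ simply at $\pm1$ and doubly at the $n-1$ interior nodes $y_1,\dots,y_{n-1}$. This gives $2+2(n-1)=2n$ conditions, the right count for $\Pi_{2n-1}$. Then $L_{n+1}[g]=\I[h]$, and
\[
g(x)-h(x)=[x,-1,1,y_1,y_1,\dots;g]\,(x^2-1)\prod_{i=1}^{n-1}(x-y_i)^2\le 0
\quad\text{on }[-1,1],
\]
so $\I[g]\le \I[h]=L_{n+1}[g]$.

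The main obstacle is justifying Hermite interpolation and the remainder formula for the nonsmooth $g$. The truncated power is only $C^{2n-2}$, and the interpolation uses at most first derivatives. Since $2n-2\ge1$ when $n\ge2$, these derivatives exist, and the confluent divided differences of order $2n$ are defined as limits of nonnegative quantities. The case $n=1$ needs no derivatives at all: Lobatto uses only the endpoints, and Gauss uses the midpoint, which we double by taking the one-sided limit. Alternatively, I would sidestep all regularity issues by mollifying. Replace $g$ by smooth $(2n-1)$-convex approximations $g_\varepsilon$ with $g_\varepsilon^{(2n)}\ge0$, apply the smooth Hermite remainder $g^{(2n)}(\xi)/(2n)!$, and pass to the limit using uniform convergence of $g_\varepsilon\to g$. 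Every functional involved is continuous under uniform convergence, so the inequalities survive the limit.
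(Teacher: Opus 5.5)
This is correct, but it is not the paper's route. The paper gives no proof of the bracket: it attributes it to Bessenyei--P\'ales and W\k{a}sowicz, and then runs the logic the other way, deducing $K_G,K_L\ge0$ in Lemma~\ref{lem:peano} by inserting truncated powers into the cited inequality. You instead prove the two kernel signs directly by Hermite interpolation, using doubled Gauss nodes, respectively simple endpoints plus doubled interior Lobatto nodes. You read the sign of $g-h$ off a nonnegative confluent divided difference times $\prod_i(x-x_i)^2\ge0$, respectively $(x^2-1)\prod_i(x-y_i)^2\le0$, and then lift to general $f$ through Remark~\ref{rem:reduction}. This makes the positivity in Lemma~\ref{lem:peano} self-contained rather than borrowed.

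The price is that the lift is only as general as Theorem~\ref{thm:popoviciu}, which as stated misses some $(2n-1)$-convex functions. Take the function equal to $1$ at $x=1$ and $0$ on $[-1,1)$. Each of its divided differences of order $2n$ is either $0$ or $1/\prod_{x_k\ne1}(1-x_k)>0$, so it is $(2n-1)$-convex, and it satisfies the bracket as $0\le0\le v_n$. Yet it admits no representation \eqref{eq:popoviciu}, whose right-hand side is continuous on $[-1,1]$ whenever it is finite-valued.

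The direct version of your argument, which you set aside, has no such limitation. For $n\ge2$ a $(2n-1)$-convex function is $C^{2n-2}$ on $(-1,1)$, all doubled nodes are interior, and the endpoints enter only as simple nodes. For $n=1$ a supporting line at $0$ and the chord suffice. That version therefore proves the theorem for every $(2n-1)$-convex $f$. Since it uses only that $\I$ is positive and exact on $\Pi_{2n-1}$, it even recovers W\k{a}sowicz's extremality $G_n[f]\le T[f]\le L_{n+1}[f]$ for any positive functional $T$ exact on $\Pi_{2n-1}$.
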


\begin{remark}
Extremal inequalities of Gauss--Legendre, Gauss--Lobatto, and Gauss--Radau type
for functions convex of higher order are not new in themselves.
In particular, it was shown by W\k{a}sowicz that for $(2n-1)$-convex functions
one has the Gauss--Lobatto extremality
\[
G_n[f]\le T[f]\le L_{n+1}[f]
\]
for every positive linear operator $T$ exact on $\Pi_{2n-1}$,
and that for $2n$-convex functions the corresponding extremal operators are
the left and right Gauss--Radau rules.
These results are proved without any differentiability assumptions,
using only higher-order convexity; see~\cite{Wasowicz2010Extremalities}.

The contribution of the present paper is not the extremality itself,
but the refined localisation of the integral inside the Gauss--Lobatto bracket
for odd orders, the resulting certified estimator, and the structural explanation
of the even-order obstruction via Radau quadrature kernels.
\end{remark}

\subsection{Peano kernels for the two error functionals}

Our refinement is most transparent in the language of Peano kernels.
The key observation is that the remainders of $G_n$ and $L_{n+1}$ admit Peano representations of the \emph{same} order.

\begin{definition}\label{def:error-functionals}
For $n\ge 1$ we define the \emph{Gauss} and \emph{Lobatto error functionals} by
\[
E^G_n[f] := \I[f]-G_n[f],
\qquad
E^L_n[f] := L_{n+1}[f]-\I[f].
\]
Both functionals annihilate $\Pi_{2n-1}$.
\end{definition}

\begin{lemma}[Peano kernel representations]\label{lem:peano}
Assume $f\in C^{2n}([-1,1])$.
Then there exist continuous kernels $K_G,K_L:[-1,1]\to\R$ such that
\[
E^G_n[f] = \int_{-1}^1 f^{(2n)}(t)K_G(t)\,\dd t,
\qquad
E^L_n[f] = \int_{-1}^1 f^{(2n)}(t)K_L(t)\,\dd t,
\]
and
\[
K_G(t)=\frac{1}{(2n-1)!}E^G_n\bigl[(\cdot-t)_+^{2n-1}\bigr],
\qquad
K_L(t)=\frac{1}{(2n-1)!}E^L_n\bigl[(\cdot-t)_+^{2n-1}\bigr].
\]
Moreover, $K_G$ and $K_L$ are non-negative.
\end{lemma}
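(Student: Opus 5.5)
The plan splits into two parts: the representation formulas, which come from the standard Peano kernel theorem, and the sign of the kernels, which comes from combining Theorem~\ref{thm:classic-bracket} with a continuity argument.

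\emph{Representation.} Start from the Taylor formula with integral remainder used in the proof of Theorem~\ref{thm:popoviciu}, now with $m=2n$:
\[
f(x)=p(x)+\int_{-1}^1 \frac{(x-t)_+^{2n-1}}{(2n-1)!}\,f^{(2n)}(t)\,\dd t,
\qquad p\in\Pi_{2n-1}.
\]
Apply $E^G_n$ and $E^L_n$. Both annihilate $\Pi_{2n-1}$ by Definition~\ref{def:error-functionals}, so the $p$-term drops out. The functionals are the integral minus (or plus) finite point evaluations. Point evaluation commutes with the $t$-integral trivially, and the $x$-integral commutes with it by Fubini, since $(x,t)\mapsto (x-t)_+^{2n-1}f^{(2n)}(t)$ is bounded and measurable. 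This gives the stated integral formulas, with $K_G$ and $K_L$ given by the displayed expressions.

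\emph{Continuity.} For $n\ge1$ the map $t\mapsto(x-t)_+^{2n-1}$ is continuous in $t$, uniformly in $x\in[-1,1]$. Hence each of $\I[(\cdot-t)_+^{2n-1}]$ and the finitely many point values is continuous in $t$, and therefore so are $K_G$ and $K_L$. In fact, for $2n-1\ge1$ the map $t\mapsto (x-t)_+^{2n-1}$ is continuous jointly in $(x,t)$, so there is no difficulty here.

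\emph{Non-negativity.} Fix $t\in[-1,1]$. The function $g_t(x):=(x-t)_+^{2n-1}$ is $(2n-1)$-convex on $[-1,1]$. One way to see this is through Theorem~\ref{thm:popoviciu} with $\mu=\delta_t$. Directly, its $2n$-th divided differences equal $(2n-1)!$ times the integral of the B-spline against $\delta_t$, which is non-negative; alternatively, approximate $g_t$ by $C^{2n}$ functions with $g^{(2n)}\ge0$ and use the fact that divided differences pass to pointwise limits. Theorem~\ref{thm:classic-bracket} then gives $G_n[g_t]\le\I[g_t]\le L_{n+1}[g_t]$, that is, $E^G_n[g_t]\ge0$ and $E^L_n[g_t]\ge0$. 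Dividing by $(2n-1)!$ yields $K_G(t)\ge0$ and $K_L(t)\ge0$.

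The main obstacle is the verification that the truncated power $g_t$ is genuinely $(2n-1)$-convex in the divided-difference sense of Definition~\ref{def:higherconv}, since $g_t$ is not $C^{2n}$. I would settle this first via the Hermite--Genocchi or B-spline (Peano) representation of divided differences,
\[
[x_0,\dots,x_{2n};g_t]=\frac{1}{(2n)!}\int M(s)\,D^{2n}g_t(s)\,\dd s,
\]
where $M\ge0$ is the B-spline with knots $x_0,\dots,x_{2n}$ and $D^{2n}g_t=(2n-1)!\,\delta_t$ in the distributional sense. This gives
\[
[x_0,\dots,x_{2n};g_t]=\frac{(2n-1)!}{(2n)!}\,M(t)\ge0.
\]
If this route is awkward to justify, the fallback is mollification: take $g_{t,\varepsilon}=g_t*\rho_\varepsilon$ with $\rho_\varepsilon\ge0$. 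Each $g_{t,\varepsilon}$ is smooth with $D^{2n}g_{t,\varepsilon}\ge0$, hence $(2n-1)$-convex, and $g_{t,\varepsilon}\to g_t$ uniformly. The conclusion then follows either by applying Theorem~\ref{thm:classic-bracket} to $g_{t,\varepsilon}$ and passing to the limit in the finitely many evaluations and the integral, or by noting directly that divided differences converge pointwise, so the non-negativity survives the limit.
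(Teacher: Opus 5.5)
Your proposal is correct and follows the paper's route. The representation comes from the Peano kernel theorem, continuity from the continuity of $t\mapsto(x-t)_+^{2n-1}$, and non-negativity from applying Theorem~\ref{thm:classic-bracket} to the $(2n-1)$-convex truncated power $(\cdot-t)_+^{2n-1}$. You add detail the paper omits: Fubini for the integral term (the paper's ``finite linear combinations of point evaluations'' glosses over it), and a B-spline or mollification argument that the truncated power is $(2n-1)$-convex, where the paper just cites the distributional characterisation.
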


\begin{proof}
This is the Peano kernel theorem with $m=2n$, applied to each error functional.
Continuity of $K_G$ and $K_L$ is automatic here because $E^G_n$ and $E^L_n$ are finite linear combinations of point evaluations:
for each fixed node $x$, the map $t\mapsto (x-t)_+^{2n-1}$ is continuous, hence so is
$t\mapsto E^{G/L}_n[(\cdot-t)_+^{2n-1}]$.

For the sign, fix $t\in[-1,1]$ and set $\phi_t(x):=(x-t)_+^{2n-1}$.
In the distributional sense one has
\[
D^{2n}\phi_t = (2n-1)!\,\delta_t\ge 0,
\]
so $\phi_t$ is $(2n-1)$-convex in the divided-difference sense (Definition~\ref{def:higherconv} and the standard equivalence with
non-negativity of the $2n$-th distributional derivative).
Therefore, Theorem~\ref{thm:classic-bracket} implies
$E^G_n[\phi_t]\ge 0$ and $E^L_n[\phi_t]\ge 0$, hence $K_G(t)\ge 0$ and $K_L(t)\ge 0$.

\end{proof}

\subsection{Kernel dominance and the refined bracket}

The refined inequality reduces to a pointwise comparison of Peano kernels.
Because this is the load-bearing comparison in the paper, we record precisely which external result is used.
In the Legendre setting this comparison is a Peano-kernel formulation of the standard Gauss-type structure discussed above: positivity, interlacing and the relevant definiteness properties stem from the common algebraic degree of precision of the principal pair.

\begin{lemma}[Kernel dominance]\label{lem:kernel-dominance}
For every $n\ge 1$ and every $t\in[-1,1]$,
\[
0\le K_G(t)\le K_L(t).
\]
Equivalently,
\[
E^G_n\bigl[(\cdot-t)_+^{2n-1}\bigr]\le E^L_n\bigl[(\cdot-t)_+^{2n-1}\bigr]
\qquad \text{for all }t\in[-1,1].
\]
\end{lemma}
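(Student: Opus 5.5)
The plan is to reduce the dominance to the positivity of one Peano kernel and to prove that positivity by a spline zero-counting argument. Write $\phi_t(x):=(x-t)_+^{2n-1}$. The first inequality $0\le K_G$ is already contained in Lemma~\ref{lem:peano}. For the second, note that
\[
E^L_n[\phi_t]-E^G_n[\phi_t]=\Lambda[\phi_t],\qquad \Lambda:=L_{n+1}+G_n-2\I .
\]
So it suffices to show that $D(t):=\Lambda[\phi_t]/(2n-1)!\ge 0$ on $[-1,1]$. The functional $\Lambda$ annihilates $\Pi_{2n-1}$, because both rules are exact there. Hence $D$ is the Peano kernel of the positive $(2n+1)$-node rule $\tfrac12(G_n+L_{n+1})$ relative to $\I$. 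As a function of $t$, $D$ is a polynomial spline of degree $2n$ with knots at the $2n+1$ nodes: $\pm1$, the $n-1$ zeros of $P_n'$ and the $n$ zeros of $P_n$. By the interlacing recalled in Section~\ref{sec:quadrature}, the interior knots alternate between Gauss and Lobatto nodes.

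\textbf{Step 1 (endpoint behaviour).} For $t$ to the right of the largest interior node, only the Lobatto endpoint $1$ contributes to the rules. This gives
\[
D(t)=\frac{v_n(1-t)^{2n-1}}{(2n-1)!}-\frac{2(1-t)^{2n}}{(2n)!}>0
\]
near $t=1$, so $D$ vanishes to order exactly $2n-1$ at $1$ and is positive nearby. By the reflection symmetry of both rules, which is harmless here as noted in Section~\ref{sec:quadrature}, the same holds at $-1$: using exactness on $\Pi_{2n-1}$ to rewrite $\Lambda[\phi_t]=-\Lambda[(t-\cdot)_+^{2n-1}]$ shows $D(t)\sim v_0(1+t)^{2n-1}/(2n-1)!$ as $t\downarrow-1$. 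As a consistency check, the value $\int D=\Lambda[x^{2n}]/(2n)!$ is positive: the classical error constants of the two rules have ratio $(n+1)/n>1$.

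\textbf{Step 2 (zero counting).} Suppose $D$ had a sign change in $(-1,1)$. Then it would have at least two interior sign changes, since it is positive near both ends. Together with the zeros of multiplicity $2n-1$ at $\pm1$, repeated application of Rolle's theorem in its spline (Budan--Fourier) form would force $D^{(2n-1)}$ to have more sign changes than it can. Up to a positive factor, $D^{(2n-1)}(t)$ equals the sawtooth
\[
S(t)=\sum_{\text{nodes }z>t}\omega_z-2(1-t),
\]
where the $\omega_z$ are the weights of $L_{n+1}+G_n$. This is linear of slope $2$ between knots and jumps downward by $\omega_z$ at each node. Its number of sign changes is therefore governed by the interleaving of the partial weight sums $\sum_{z>t}\omega_z$ with the line $2(1-t)$.

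\textbf{Main obstacle.} The difficult step is bounding the sign changes of $S$ sharply, since a crude count of the $2n+1$ jumps is not enough. I would try the Chebyshev--Markov--Stieltjes separation inequalities for Gauss-type rules. For a positive rule exact on $\Pi_{2N-2}$, they state that the partial sums of weights up to the $k$-th node bracket $\int_{-1}^{z_k}$. Applied to $G_n$ and to $L_{n+1}$ separately, and combined with interlacing, they should show that $S$ changes sign exactly once between consecutive interlaced nodes. That gives precisely $2n-1$ sign changes of $S$ in $(-1,1)$, each forced on the nonnegative $D$ by the Rolle count, with none left over for an extra oscillation of $D$. This would conclude $D\ge0$, i.e.\ $K_G\le K_L$.
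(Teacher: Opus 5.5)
\textbf{There is a genuine gap in Step 2.} Your reduction to $D(t)=\Lambda[\phi_t]/(2n-1)!\ge 0$ and your endpoint analysis are fine. There are two harmless slips: since $2n-1$ is odd, $(x-t)_+^{2n-1}-(t-x)_+^{2n-1}=(x-t)^{2n-1}$, so the reflection identity is $\Lambda[\phi_t]=+\Lambda[(t-\cdot)_+^{2n-1}]$; and $D^{(2n-1)}=-S$ rather than a positive multiple of $S$.

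The failure is that the sign-change count you hope for is false. Rolle's theorem only gives $Z(S)\ge Z(D)+2n-1$. To exclude $Z(D)\ge 2$ you therefore need $Z(S)\le 2n$, i.e.\ $Z(S)=2n-1$ by parity. But $S$ increases on each of the $2n$ gaps between consecutive merged nodes and drops at each of the $2n-1$ interior nodes, and it actually uses all of this room:
\begin{itemize}
\item For $n=1$, $S(t)=1+2t$ on $(-1,0)$ and $S(t)=-1+2t$ on $(0,1)$. That is three sign changes, not one.
\item For $n=2$ and $n=3$, a direct computation with the explicit nodes and weights gives $7$ and $11$ sign changes, the maximal $4n-1$.
\end{itemize}
Your own heuristic, one sign change per gap between interlaced nodes, already gives $2n$ crossings plus the downward jumps. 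With $Z(S)=4n-1$ the Budan--Fourier count only yields $Z(D)\le 2n$.

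The Chebyshev--Markov--Stieltjes inequalities cannot repair this. Applied to each rule separately, they control $F_G(t)=\sum_{x_i>t}w_i-(1-t)$ and the analogous $F_L$ only at their own nodes. They say nothing about $S=F_G+F_L$ at the nodes of the other rule. The combined rule $\tfrac12(G_n+L_{n+1})$ has $2n+1$ nodes but only degree $2n-1$, far too low for a separation theorem of its own.

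\textbf{Sign information about $S$ alone cannot decide the question.} For $n=1$ the true kernel is $D(t)=|t|(1-|t|)$, which touches zero at the Gauss node: $K_G(0)=K_L(0)=\tfrac12$. Now move the downward jump of $S$ from $0$ to a small $\varepsilon>0$, and adjust its size so that the primitive still vanishes at $1$. The sign pattern $-,+,-,+$ of $S$ is unchanged, yet the resulting function equals $-t(1+t)<0$ on $(0,\varepsilon)$. So the dominance $K_G\le K_L$ is a genuinely quantitative statement about the two kernels, and no argument based only on counting sign changes of $S$ can give it.

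\textbf{How the paper does it.} The paper does not prove the lemma from scratch. It invokes the Brass--Schmeisser theorem: the Peano kernel of a positive rule of degree $2n-1$ is bounded in absolute value by the Lobatto kernel. This gives $|K_G|\le|K_L|$, and the signs $K_G,K_L\ge 0$ from Lemma~\ref{lem:peano} then remove the absolute values. To complete your route you would need an input of comparable strength, not just the sign pattern of $S$.
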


\begin{proof}
We use the Peano-kernel estimate of Brass--Schmeisser~\cite[Theorem~6 and Remark~1]{BrassSchmeisser1981}; see also the discussion in Brass--Petras~\cite[Chapter~3]{BrassPetrasBook} and Szostok~\cite[Remark~3]{Szostok2024}.
Specialised to the Legendre weight and to odd degree $2n-1$, it says that the Peano kernel of the Gauss remainder is dominated in absolute value by the Peano kernel of the corresponding Gauss--Lobatto remainder.
Equivalently, for the two principal formulae of degree of precision $2n-1$,
\begin{equation}\label{eq:brass-schmeisser-kernel-use}
|K_G(t)|\le |K_L(t)|,\qquad -1\le t\le 1,
\end{equation}
where the kernels are taken in the orientations of Lemma~\ref{lem:peano}.
The proof in Brass--Schmeisser is a monospline/node-elimination argument for the Peano kernels; it is consistent with Freud's Gauss-type description in which the common algebraic degree of precision is the source of positivity and interlacing for the principal formulae.

We now verify the hypotheses in the present notation.
The Gauss rule $G_n$ has positive weights and degree of precision $2n-1$.
The Lobatto rule $L_{n+1}$ is the Legendre Gauss--Lobatto principal formula with the same degree of precision, positive endpoint weights, and positive interior weights.
Finally, Lemma~\ref{lem:peano} gives the definiteness signs in our orientation: $K_G\ge0$ for $\I-G_n$ and $K_L\ge0$ for $L_{n+1}-\I$.
Thus the absolute-value estimate \eqref{eq:brass-schmeisser-kernel-use} becomes $K_G(t)\le K_L(t)$ for all $t\in[-1,1]$.
The lower bound $K_G\ge0$ is already part of Lemma~\ref{lem:peano}.
\end{proof}

\begin{remark}The kernel dominance in Lemma~\ref{lem:kernel-dominance} is the only place where we use the special Legendre Gauss/Gauss--Lobatto principal-pair structure.
In this use of interlacing one should not separate the node picture from the algebraic degree of precision: within the Gauss-type family, positivity of the weights, interlacing of the nodes, and the inequality in Lemma~\ref{lem:kernel-dominance} are consequences of the common precision
$\operatorname{ADP}(G_n)=\operatorname{ADP}(L_{n+1})=2n-1$.
For other weights, such as Jacobi weights $w(x)=(1-x)^{\alpha}(1+x)^{\beta}$, analogous Peano kernels and positive Gauss/Lobatto rules exist, but the refined half-bracket would require the corresponding domination theorem for that weight.
We do not use, or claim, such an extension here.
\end{remark}

Before we introduce estimators, we pin down the precise refinement that drives the rest of the paper.

\begin{theorem}[Refined Gauss--Lobatto inequality]\label{thm:refined}
Let $n\ge 1$ and let $f:[-1,1]\to\R$ be $(2n-1)$-convex.
Then
\begin{equation}\label{eq:refined}
G_n[f]\le \I[f]\le \frac{G_n[f]+L_{n+1}[f]}{2}\le L_{n+1}[f].
\end{equation}
If $f$ is $(2n-1)$-concave, all inequalities reverse.
\end{theorem}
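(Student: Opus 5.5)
The plan is to reduce everything to the kernel dominance of Lemma~\ref{lem:kernel-dominance} via the reduction principle of Remark~\ref{rem:reduction}.

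The outer inequalities are already known. The inequality $G_n[f]\le \I[f]$ is Theorem~\ref{thm:classic-bracket}. The last inequality $\tfrac12(G_n[f]+L_{n+1}[f])\le L_{n+1}[f]$ is equivalent to $G_n[f]\le L_{n+1}[f]$, which also follows from Theorem~\ref{thm:classic-bracket} by transitivity.

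The new content is the middle inequality $\I[f]\le \tfrac12(G_n[f]+L_{n+1}[f])$. Rearranging, it reads
\[
\I[f]-G_n[f]\le L_{n+1}[f]-\I[f],
\]
that is, $\Lambda[f]:=E^L_n[f]-E^G_n[f]\ge 0$. This $\Lambda$ is a finite linear combination of the integral and point evaluations, and it annihilates $\Pi_{2n-1}$ because both error functionals do (Definition~\ref{def:error-functionals}).

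By Remark~\ref{rem:reduction} with $m=2n$, it suffices to check that $\Lambda[(\cdot-t)_+^{2n-1}]\ge 0$ for every $t\in[-1,1]$. This is exactly the second formulation in Lemma~\ref{lem:kernel-dominance}. Substituting the representation \eqref{eq:popoviciu} and interchanging $\Lambda$ with the $\mu$-integral then gives
\[
\Lambda[f]=\int_{-1}^1 \Lambda[(\cdot-t)_+^{2n-1}]\,\mu(\dd t)\ge 0.
\]

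The one point needing care is justifying this interchange for a general, possibly nonsmooth, $(2n-1)$-convex $f$.
\begin{itemize}
\item Point evaluations commute with the integral trivially.
\item For $\I$, we use Fubini, which applies because $\mu$ is a finite non-negative measure and $(x-t)_+^{2n-1}$ is bounded and jointly continuous on $[-1,1]^2$.
\end{itemize}
Finiteness of $\mu$ is part of Theorem~\ref{thm:popoviciu}, since $\mu$ is a Borel measure on the compact interval arising from a distribution. If one worries about mass at the endpoints or about $\mu$ being only locally finite on $(-1,1)$, a fallback is to first prove the inequality on $[-1+\varepsilon,1-\varepsilon]$, where that issue does not arise. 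One then passes to the limit, using that a $(2n-1)$-convex function with $2n-1\ge1$ is continuous on the open interval and that the quadrature nodes and values vary continuously under the affine rescaling. I expect this to be routine; the real depth sits in Lemma~\ref{lem:kernel-dominance}, which we take as given.

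For the concave case, I apply the convex case to $-f$. This is $(2n-1)$-convex by Definition~\ref{def:higherconv}, since divided differences are linear in $f$. Linearity of $G_n$, $L_{n+1}$ and $\I$ then reverses every inequality in \eqref{eq:refined}.
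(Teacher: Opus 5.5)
Your proposal is correct and follows the paper's proof. You reduce the middle inequality to $E^G_n[f]\le E^L_n[f]$, integrate the pointwise dominance of Lemma~\ref{lem:kernel-dominance} against the measure of Theorem~\ref{thm:popoviciu} with $m=2n$, and handle the concave case via $-f$. Your Fubini remark and shrinking-interval fallback go beyond what the paper writes, and the worry behind them is real: for instance $-\sqrt{1-x^2}$ is convex on $[-1,1]$ but has right derivative $-\infty$ at $-1$, so it admits no representation \eqref{eq:popoviciu}. If you use the fallback, however, note that $L_{n+1}$ samples $f(\pm1)$, where continuity on $(-1,1)$ says nothing, so the limit only gives the bound with $f(-1^+)$ and $f(1^-)$ in place of $f(\pm1)$. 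You must add that a $(2n-1)$-convex $f$ satisfies $f(-1)\ge f(-1^+)$ and $f(1)\ge f(1^-)$, which holds because both extreme points carry positive coefficients in a divided difference of order $2n$; this then closes the argument.
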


\begin{proof}
We rewrite the middle inequality as
\[
\I[f]\le \frac{G_n[f]+L_{n+1}[f]}{2}
\quad\Longleftrightarrow\quad
E^G_n[f]\le E^L_n[f].
\]
By Theorem~\ref{thm:popoviciu} with $m=2n$, we can represent
\[
f(x)=p(x)+\int_{-1}^1 (x-t)_+^{2n-1}\,\mu(\dd t),
\]
where $p\in\Pi_{2n-1}$ and $\mu$ is non-negative.
Since both remainders annihilate $\Pi_{2n-1}$, we obtain
\[
E^G_n[f] = \int_{-1}^1 E^G_n\bigl[(\cdot-t)_+^{2n-1}\bigr]\,\mu(\dd t),\qquad
E^L_n[f] = \int_{-1}^1 E^L_n\bigl[(\cdot-t)_+^{2n-1}\bigr]\,\mu(\dd t).
\]
Lemma~\ref{lem:kernel-dominance} implies that the first integrand is pointwise dominated by the second,
hence $E^G_n[f]\le E^L_n[f]$ and the desired refinement holds.
The concave case follows by applying the convex case to $-f$.
\end{proof}

\begin{remark}
For $n=1$, the refinement is a familiar strengthening of Hermite--Hadamard.
For $n=2$ and $n=3$ (the $3$-convex and $5$-convex cases), related inequalities between remainder terms were studied by
Komisarski and Wąsowicz \cite{KomisarskiWasowicz2017}.
\end{remark}

\section{Certified estimators and optimality}\label{sec:estimators}

Once the integral is confined to a shorter interval, the natural next step is to take its centre.
This section introduces the estimator $Q_n$, proves a sharp a posteriori bound, and records the elementary fact that $Q_n$ is the centre of the certified refined interval.

\subsection{A sharp certified one-point estimate}

The next proposition is the computational heart of the paper: it turns a qualitative inequality into a one-shot estimate
with a rigorous and (as we will see) sharp error bar.

\begin{proposition}[A certified estimator]\label{prop:Qn}
Let $n\ge 1$ and define
\[
Q_n[f]:=\frac{3}{4}G_n[f]+\frac{1}{4}L_{n+1}[f].
\]
If $f$ is $(2n-1)$-convex or $(2n-1)$-concave on $[-1,1]$, then
\begin{equation}\label{eq:Qn-bound}
\abs{\I[f]-Q_n[f]}\le \frac{1}{4}\abs{L_{n+1}[f]-G_n[f]}.
\end{equation}
\end{proposition}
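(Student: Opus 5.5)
The plan is to reduce the statement to Theorem~\ref{thm:refined} by elementary algebra. In the convex case, write $g:=G_n[f]$, $\ell:=L_{n+1}[f]$ and $m:=(g+\ell)/2$. Theorem~\ref{thm:refined} gives $g\le \I[f]\le m\le \ell$, so in particular $\ell-g\ge 0$. The refined interval $[g,m]$ has length $(\ell-g)/2$.

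Its centre is
\[
\frac{g+m}{2}=\frac{g}{2}+\frac{g+\ell}{4}=\frac34 g+\frac14\ell=Q_n[f].
\]
Any point of an interval lies within half the interval's length of its centre. Hence
\[
\abs{\I[f]-Q_n[f]}\le \frac12\cdot\frac{\ell-g}{2}=\frac14(\ell-g)=\frac14\abs{L_{n+1}[f]-G_n[f]}.
\]
Concretely, one checks the two one-sided inequalities directly:
\[
\I[f]-Q_n[f]\le m-Q_n[f]=\tfrac14(\ell-g), \qquad Q_n[f]-\I[f]\le Q_n[f]-g=\tfrac14(\ell-g).
\]

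For the concave case, apply the convex case to $-f$, which is $(2n-1)$-convex. Since $\I$, $G_n$, $L_{n+1}$ and $Q_n$ are all linear, both sides of \eqref{eq:Qn-bound} are invariant under $f\mapsto -f$, so the bound transfers unchanged. Alternatively, Theorem~\ref{thm:refined} already provides the reversed chain $\ell\le m\le \I[f]\le g$, and the same centre computation applies with the roles of the endpoints swapped.

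There is no genuine obstacle here: all the analytic content sits in Theorem~\ref{thm:refined}, and hence in the kernel dominance of Lemma~\ref{lem:kernel-dominance}. The only point needing care is the absolute value on the right-hand side. It is needed precisely because $\ell-g$ is nonnegative in the convex case and nonpositive in the concave case, and the bound must be stated uniformly over both. Sharpness of the constant $\tfrac14$ is a separate claim and is not part of this proposition.
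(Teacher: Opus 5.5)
Your proposal is correct and follows essentially the same route as the paper: Theorem~\ref{thm:refined} places $\I[f]$ in $[G_n[f],\,\tfrac12(G_n[f]+L_{n+1}[f])]$, $Q_n[f]$ is the centre of this interval, so the error is at most half its length, $\tfrac14(L_{n+1}[f]-G_n[f])$, and the concave case follows by $f\mapsto -f$. Your explicit check of the two one-sided inequalities is a small addition that the paper leaves implicit.
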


\begin{proof}
Assume first that $f$ is $(2n-1)$-convex.
By Theorem~\ref{thm:refined}, $\I[f]\in\bracks{G_n[f],\,M_n[f]}$ where $M_n[f]=\frac{G_n[f]+L_{n+1}[f]}{2}$.
Moreover, $Q_n[f]=\frac{G_n[f]+M_n[f]}{2}$ is the midpoint of this interval.
Hence 
\[
    \abs{\I[f]-Q_n[f]}\le \frac{M_n[f]-G_n[f]}{2}=\frac{L_{n+1}[f]-G_n[f]}{4}.
\]
The concave case reduces to the convex case by replacing $f$ with $-f$:
$(2n-1)$-concavity is equivalent to $(2n-1)$-convexity of $-f$, and both
$\abs{\I[f]-Q_n[f]}$ and $\abs{L_{n+1}[f]-G_n[f]}$ are invariant under $f\mapsto -f$.
\end{proof}

\subsection{Optimality of the interval centre}

The coefficient $1/4$ first has an interval-theoretic interpretation.
Once Theorem~\ref{thm:refined} is known, the certified information about the integral is the refined interval
\[
\left[G_n[f],\,\frac{G_n[f]+L_{n+1}[f]}2\right]
\]
in the convex case, with the reversed interval in the concave case.
The best single affine number extracted from this certified interval is its centre.

\begin{proposition}[Best interval-based affine estimate]\label{prop:interval-centre}
Fix $n\ge 1$ and write
\[
A_\theta[f] := (1-\theta)G_n[f]+\theta L_{n+1}[f],
\qquad \theta\in\mathbb{R}.
\]
Among all affine combinations $A_\theta$, the choice $\theta=1/4$ uniquely minimises the worst-case distance from $A_\theta[f]$ to the certified refined interval.
The resulting worst-case radius is
\[
\frac14\abs{L_{n+1}[f]-G_n[f]}.
\]
\end{proposition}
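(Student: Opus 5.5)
The plan is to reduce everything to elementary interval geometry in the single real parameter $\theta$. Set $G=G_n[f]$, $L=L_{n+1}[f]$ and $d=L-G$. By Theorem~\ref{thm:refined}, the certified information is the interval with endpoints $G$ and $G+d/2$: in the convex case $d\ge0$ and the interval is $[G,G+d/2]$, while in the concave case $d\le0$ and the interval is $[G+d/2,G]$. In both cases we have $A_\theta[f]=G+\theta d$. The ``worst-case distance from $A_\theta[f]$ to the certified interval'' should be read as the largest possible value of $\abs{\I[f]-A_\theta[f]}$ when the only information is $\I[f]\in J$. That value is $\max\{\abs{A_\theta-a},\abs{A_\theta-b}\}$, where $a$ and $b$ are the endpoints of $J$.

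Next I compute this quantity explicitly. The two endpoint distances are $\abs{\theta}\,\abs{d}$ and $\abs{\theta-\tfrac12}\,\abs{d}$, so the worst-case radius is
\[
r(\theta)=\abs{d}\,\max\bigl\{\abs{\theta},\,\abs{\theta-\tfrac12}\bigr\}.
\]
This formula is the same in the convex and concave cases because only $\abs{d}$ appears. The function $\theta\mapsto\max\{\abs{\theta},\abs{\theta-\frac12}\}$ is the distance from $\theta$ to the farther endpoint of $[0,\tfrac12]$. It equals $\tfrac12-\theta$ for $\theta\le\tfrac14$ and $\theta$ for $\theta\ge\tfrac14$. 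It is therefore strictly decreasing on $(-\infty,\tfrac14]$ and strictly increasing on $[\tfrac14,\infty)$. Its unique minimiser is $\theta=\tfrac14$, where the minimum value is $\tfrac14$, and so the minimal radius is $r(\tfrac14)=\tfrac14\abs{d}$. This recovers exactly the bound of Proposition~\ref{prop:Qn}, with $A_{1/4}=Q_n$.

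The one delicate point is uniqueness when $d=0$: then every $\theta$ gives radius $0$. I would handle this in one of two ways. The first option is to interpret ``uniquely'' uniformly over the class of $(2n-1)$-convex or $(2n-1)$-concave $f$, that is, to minimise $\sup_f r(\theta)/\abs{d}$ over $f$ with $d\neq0$. The second option is to note that such an $f$ exists, for instance $f(x)=x^{2n}$: there Lemma~\ref{lem:peano} gives $E^L_n[f]>0$ because the kernel is non-negative and not identically zero, so $d>0$. With either reading, the strict monotonicity computed above yields uniqueness. This is a matter of bookkeeping rather than a real obstacle; the substance of the proposition is Theorem~\ref{thm:refined}, which is already available.
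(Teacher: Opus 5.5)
Your proof is correct and is essentially the paper's argument: write $A_\theta[f]=G_n[f]+\theta D$, express the worst-case deviation from the refined interval as $\abs{D}\max\{\abs{\theta},\abs{\tfrac12-\theta}\}$, and note that this piecewise-linear function has its unique minimiser at $\theta=\tfrac14$. Handling the concave case through $\abs{d}$ rather than $f\mapsto -f$, and observing that uniqueness only makes sense when $D\neq0$ (or uniformly over the cone), are harmless additions that make explicit a point the paper leaves implicit.
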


\begin{proof}
It suffices to consider the convex case and write $D=L_{n+1}[f]-G_n[f]\ge 0$.
The refined interval is $[G_n[f],G_n[f]+D/2]$, while
$A_\theta[f]=G_n[f]+\theta D$.
The largest possible interval-based deviation is therefore
\[
D\max\{|\theta|,|1/2-\theta|\}.
\]
This expression is uniquely minimised at $\theta=1/4$, where its value is $D/4$.
The concave case is the same after replacing $f$ by $-f$.
\end{proof}

\subsection{Function-theoretic sharpness of the constant}

The preceding proposition is a decision statement about the centre of a certified interval.
The next proposition is different: it shows that the numerical constant in the error estimate cannot be improved on the cone of higher-order convex functions.

\begin{proposition}[Sharpness]\label{prop:sharpness}
The constant $\tfrac14$ in \eqref{eq:Qn-bound} is sharp.
More precisely, for every $n\ge 1$ there exists a sequence of $(2n-1)$-convex functions $(f_\varepsilon)_{\varepsilon>0}$ such that
\[
\lim_{\varepsilon\downarrow 0}\,
\frac{\abs{\I[f_\varepsilon]-Q_n[f_\varepsilon]}}{\abs{L_{n+1}[f_\varepsilon]-G_n[f_\varepsilon]}}
=\frac14.
\]
\end{proposition}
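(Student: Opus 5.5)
We take $f_\varepsilon$ to be a single truncated power concentrated near the right endpoint, so that the Gauss rule does not see $f_\varepsilon$ at all while the Lobatto rule does, through its endpoint node. The ratio in the statement then tends to $1/4$ because the integral lies asymptotically at the Gauss end of the refined bracket of Theorem~\ref{thm:refined}. This is consistent with the bracket: writing $\I=G_n+sD$ with $D=L_{n+1}-G_n$ and $s\in[0,1/2]$, we have $\abs{\I-Q_n}/D=\abs{s-1/4}$, and this equals $1/4$ exactly when $s\in\{0,1/2\}$. So we aim for $s\to 0$.

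Concretely, let $x_n<1$ be the largest Gauss--Legendre node. For $0<\varepsilon<1-x_n$ set
\[
f_\varepsilon(x):=\bigl(x-(1-\varepsilon)\bigr)_+^{2n-1}.
\]
As noted in the proof of Lemma~\ref{lem:peano}, $D^{2n}f_\varepsilon=(2n-1)!\,\delta_{1-\varepsilon}\ge0$, so $f_\varepsilon$ is $(2n-1)$-convex. The three functionals are computed directly:
\begin{itemize}
\item every Gauss node satisfies $x_i\le x_n<1-\varepsilon$, so $G_n[f_\varepsilon]=0$;
\item the only Lobatto node in $(1-\varepsilon,1]$ is $y_n=1$, so $L_{n+1}[f_\varepsilon]=v_n\varepsilon^{2n-1}$ with $v_n=\tfrac{2}{n(n+1)}$;
\item $\I[f_\varepsilon]=\varepsilon^{2n}/(2n)$.
\end{itemize}

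Hence $L_{n+1}[f_\varepsilon]-G_n[f_\varepsilon]=v_n\varepsilon^{2n-1}>0$ and $Q_n[f_\varepsilon]=\tfrac14 v_n\varepsilon^{2n-1}$. The ratio is therefore
\[
\frac{\abs{\I[f_\varepsilon]-Q_n[f_\varepsilon]}}{\abs{L_{n+1}[f_\varepsilon]-G_n[f_\varepsilon]}}
=\Bigl|\frac{\varepsilon}{2n\,v_n}-\frac14\Bigr|
=\Bigl|\frac{(n+1)\varepsilon}{4}-\frac14\Bigr|,
\]
which tends to $\tfrac14$ as $\varepsilon\downarrow0$. Together with Proposition~\ref{prop:Qn}, this shows the constant cannot be lowered.

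The only point that needs checking is that no Gauss node lies in $(1-\varepsilon,1]$. This holds because the zeros of $P_n$ lie in the open interval $(-1,1)$, so $1-x_n>0$ gives room for all small $\varepsilon$. No kernel estimate is required. A symmetric construction at $t\to-1^+$ is not available in this form, since $(x-t)_+^{2n-1}$ with $t$ near $-1$ is seen by every node; the right endpoint is the correct choice.
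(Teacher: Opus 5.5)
Your proof is correct and is the same as the paper's: the truncated power $(x-(1-\varepsilon))_+^{2n-1}$, the evaluations $G_n[f_\varepsilon]=0$, $L_{n+1}[f_\varepsilon]=v_n\varepsilon^{2n-1}$, $\I[f_\varepsilon]=\varepsilon^{2n}/(2n)$, and the ratio $\abs{\varepsilon/(2nv_n)-\tfrac14}\to\tfrac14$.

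Two minor remarks. First, you say the Gauss nodes are the only thing to check, but the threshold $\varepsilon<1-x_n$ also has to keep the interior Lobatto nodes out of $(1-\varepsilon,1]$; this holds because the zeros of $P_n'$ interlace those of $P_n$ (Rolle), and you should state it. Second, your closing aside is too pessimistic: the ratio depends only on $E^G_n,E^L_n$, which annihilate $\Pi_{2n-1}$ and are reflection-invariant, so both kernels are even and $t=-1+\varepsilon$ gives exactly the same ratio.
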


\begin{proof}
Fix $n$ and set $t_\varepsilon:=1-\varepsilon$.
Consider the truncated power
\[
f_\varepsilon(x):=(x-t_\varepsilon)_+^{2n-1},\qquad x\in[-1,1],
\]
which is $(2n-1)$-convex.
For $\varepsilon$ so small that $t_\varepsilon$ exceeds all interior nodes of both $G_n$ and $L_{n+1}$ (which is possible since all interior
Gauss and Lobatto nodes lie strictly in $(-1,1)$), we have
$f_\varepsilon(x_i)=0$ for every Gauss node $x_i$, and $f_\varepsilon(y_i)=0$ for every Lobatto node $y_i$ except $y_n=1$.
Therefore
\[
G_n[f_\varepsilon]=0,\qquad
L_{n+1}[f_\varepsilon]=v_n f_\varepsilon(1)=v_n\,\varepsilon^{2n-1},
\qquad v_n=\frac{2}{n(n+1)}.
\]
On the other hand,
\[
\I[f_\varepsilon]=\int_{t_\varepsilon}^1 (x-t_\varepsilon)^{2n-1}\,\dd x=\frac{\varepsilon^{2n}}{2n}.
\]
Hence $L_{n+1}[f_\varepsilon]-G_n[f_\varepsilon]=v_n\varepsilon^{2n-1}$ and
\[
Q_n[f_\varepsilon]=\frac14 L_{n+1}[f_\varepsilon]=\frac{v_n}{4}\,\varepsilon^{2n-1}.
\]
Consequently,
\[
\frac{\abs{\I[f_\varepsilon]-Q_n[f_\varepsilon]}}{\abs{L_{n+1}[f_\varepsilon]-G_n[f_\varepsilon]}}
=
\abs{\frac{\varepsilon}{2n v_n}-\frac14}
\;\xrightarrow[\varepsilon\downarrow 0]{}\;\frac14,
\]
as claimed.
\end{proof}

\begin{remark}[Two distinct optimality statements]
Proposition~\ref{prop:interval-centre} and Proposition~\ref{prop:sharpness} express two different senses of optimality.
The first is interval-theoretic: given only the certified inclusion supplied by Theorem~\ref{thm:refined}, the centre is the unique best affine decision.
The second is function-theoretic: even within the cone of $(2n-1)$-convex functions, the coefficient $1/4$ in the a posteriori bound is asymptotically attained by truncated powers concentrating at an endpoint.
Thus the propositions are complementary, not redundant.
\end{remark}

\subsection{Composite rules and stopping criteria}

The one-interval estimate is additive, which is exactly what is needed for certified composite integration.
Let $-1=a_0<a_1<\dots<a_m=1$ be a partition and let $Q_n^{\rm comp}$ be obtained by applying $Q_n$ on each subinterval.

\begin{proposition}[Composite certified bound]\label{prop:composite}
If $f$ is $(2n-1)$-convex or $(2n-1)$-concave on $[-1,1]$, then
\begin{equation}\label{eq:composite-abs}
\bigl|\I[f]-Q_n^{\rm comp}[f]\bigr|
\le \frac14\sum_{j=1}^m
\bigl|L_{n+1}^{[a_{j-1},a_j]}[f]-G_n^{[a_{j-1},a_j]}[f]\bigr|.
\end{equation}
More precisely, in the convex case set
\[
D_j:=L_{n+1}^{[a_{j-1},a_j]}[f]-G_n^{[a_{j-1},a_j]}[f]\ge0.
\]
Then on each subinterval
\[
-\frac14D_j
\le \I_{[a_{j-1},a_j]}[f]-Q_n^{[a_{j-1},a_j]}[f]
\le \frac14D_j.
\]
In the concave case the same statement holds with \(D_j\) replaced by \(|D_j|\).
\end{proposition}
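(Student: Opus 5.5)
The plan is to reduce the composite statement to the one-interval estimate, applied on each subinterval, and then sum using the triangle inequality.

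First I transfer Proposition~\ref{prop:Qn} to a general interval $[a,b]=[a_{j-1},a_j]$. Let $T_j(x)=\frac{a_{j-1}+a_j}{2}+\frac{a_j-a_{j-1}}{2}x$ and $g_j:=f\circ T_j$ on $[-1,1]$. The slope of $T_j$ is positive, and the order $2n-1$ is odd, so by the affine-invariance discussion in Section~\ref{sec:quadrature} the function $g_j$ is $(2n-1)$-convex whenever $f$ is. The same holds for concave $f$. Directly, restricting $f$ to a subinterval keeps all divided differences of order $2n$ non-negative, and composing with an increasing affine map multiplies them by a positive power of the slope.

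By the definition of the transplanted rules,
\[
\I_{[a_{j-1},a_j]}[f]=h_j\I[g_j],\quad G_n^{[a_{j-1},a_j]}[f]=h_jG_n[g_j],\quad L_{n+1}^{[a_{j-1},a_j]}[f]=h_jL_{n+1}[g_j],
\]
where $h_j=(a_j-a_{j-1})/2>0$. Hence also $Q_n^{[a_{j-1},a_j]}[f]=h_jQ_n[g_j]$.

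Applying Theorem~\ref{thm:refined} to $g_j$ in the convex case gives $D_j=h_j(L_{n+1}[g_j]-G_n[g_j])\ge 0$. It also gives
\[
\I_{[a_{j-1},a_j]}[f]\in\Bigl[G_n^{[a_{j-1},a_j]}[f],\,G_n^{[a_{j-1},a_j]}[f]+\tfrac12D_j\Bigr],
\]
an interval whose centre is $Q_n^{[a_{j-1},a_j]}[f]$. This yields the two-sided local bound $-\frac14D_j\le \I_{[a_{j-1},a_j]}[f]-Q_n^{[a_{j-1},a_j]}[f]\le\frac14D_j$. In the concave case I apply the convex case to $-f$. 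This flips the sign of $D_j$ and of the error, which gives the same statement with $|D_j|$.

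Finally, additivity of the integral over the partition and the definition of $Q_n^{\rm comp}$ as the sum of the local $Q_n$ give
\[
\I[f]-Q_n^{\rm comp}[f]=\sum_{j=1}^m\bigl(\I_{[a_{j-1},a_j]}[f]-Q_n^{[a_{j-1},a_j]}[f]\bigr).
\]
The triangle inequality together with the local bounds then yields \eqref{eq:composite-abs}. The only step requiring care is the first one: convexity must be preserved under restriction and the increasing affine rescaling, with the correct sign. Once that is settled, everything else is bookkeeping.
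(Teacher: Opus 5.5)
Your proposal is correct and follows essentially the same route as the paper. Both transfer to the reference interval by restriction and affine rescaling, and both apply Theorem~\ref{thm:refined} on each subinterval to get $G_n^{[a_{j-1},a_j]}[f]\le \I_{[a_{j-1},a_j]}[f]\le G_n^{[a_{j-1},a_j]}[f]+\tfrac12 D_j$, an interval whose centre is the local $Q_n$. The concave case then comes from $-f$, and the global bound from summing with the triangle inequality. Your check that order-$2n$ divided differences pick up the positive factor $h_j^{2n}$ simply spells out the paper's one-line remark that higher-order convexity survives restriction and affine rescaling.
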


\begin{proof}
Higher-order convexity is inherited by restriction to subintervals, and the quadrature rules are obtained from the reference rules by affine rescaling.
In the convex case, Theorem~\ref{thm:refined} gives on the $j$-th subinterval
\[
G_j\le \I_j\le G_j+\frac12D_j,
\qquad
Q_j=G_j+\frac14D_j,
\]
with the evident shorthand notation.
Hence \(\I_j-Q_j\in[-D_j/4,D_j/4]\).
The concave case follows by applying the same argument to \(-f\), which replaces \(D_j\) by \(|D_j|\).
Summing the local estimates and using the triangle inequality gives \eqref{eq:composite-abs}.
\end{proof}

\begin{remark}[Adaptive and uniform use]\label{rem:algorithm}
Formula \eqref{eq:composite-abs} gives a direct stopping criterion: refine until
\[
\sum_{j=1}^m \frac14
\bigl|L_{n+1}^{[a_{j-1},a_j]}[f]-G_n^{[a_{j-1},a_j]}[f]\bigr|
\le \varepsilon.
\]
The companion code implements both uniform refinement and greedy bisection of the subinterval with the largest local contribution.
This is a standard Peano-kernel certification paradigm; compare F\"orster's survey of stopping rules \cite{Foerster1993}.
The a posteriori estimate may also be viewed as an instance of the general linear-combination principle of Avdzhieva--Nikolov \cite[Theorem~1]{AvdzhievaNikolov2017}.
Although their applications concern definite quadrature formulae of fourth order, the theorem itself is formulated for a general pair of definite linear functionals; with the present pair $G_n,L_{n+1}$ it gives the same type of certified inequalities.
A cubature analogue of this principle is given by Nikolov--Nikolov \cite[Theorem~4]{NikolovNikolov2024}, where it is used to derive definiteness, monotonicity and a posteriori error estimates for modified product cubature formulae.
Thus the linear-combination mechanism is part of the established theory of definite formulae; the new point here is the refined Gauss--Lobatto half-bracket and the resulting sharp constant for the Legendre principal pair.
\end{remark}

\section{The smooth and analytic regime}\label{sec:smooth}

Although our main results are shape-theoretic, it is natural to ask how the certified estimator behaves on very smooth
integrands.  In that regime one can exploit classical error expansions and even design a degree-raising combination of
$G_n$ and $L_{n+1}$.

\subsection{Error constants and a degree-raising combination}

For smooth functions, the leading term of the error admits an explicit constant.
We recall one standard form for Gauss--Legendre; see, \emph{e.g.}, \cite{DavisRabinowitz}. In the next proposition, we explicitly derive the leading error coefficients.
\begin{proposition}\label{prop:error-constants}
Let $f\in C^{2n}([-1,1])$.
There exist points $\xi_G,\xi_L\in(-1,1)$ such that
\[
\I[f]-G_n[f] = c_n\, f^{(2n)}(\xi_G),\qquad
L_{n+1}[f]-\I[f] = \frac{n+1}{n}\,c_n\, f^{(2n)}(\xi_L),
\]
where
\[
c_n = \frac{2^{2n+1}(n!)^4}{(2n+1)\bigl((2n)!\bigr)^3}.
\]
Equivalently, the Lobatto coefficient is
\[
c_n^{L}:=
\frac{2^{2n+1}n^3(n+1)((n-1)!)^4}{(2n+1)\bigl((2n)!\bigr)^3}
=\frac{n+1}{n}c_n.
\]
\end{proposition}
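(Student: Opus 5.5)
The plan is to combine the Peano representations of Lemma~\ref{lem:peano} with the integral mean value theorem, and then to compute the two constants by evaluating each error functional on the monomial $x^{2n}$.

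\textbf{Step 1 (reduction to kernel integrals).} By Lemma~\ref{lem:peano}, $E^G_n[f]=\int_{-1}^1 f^{(2n)}(t)K_G(t)\,\dd t$ with $K_G\ge 0$ continuous. Since $f^{(2n)}$ is continuous, the mean value theorem for integrals gives a point $\xi_G$ with $E^G_n[f]=f^{(2n)}(\xi_G)\int_{-1}^1K_G$.

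To place $\xi_G$ in the open interval, I will use that $K_G$ is continuous, nonnegative and not identically zero. Hence $\int K_G\,(f^{(2n)}-f^{(2n)}(\xi))$ can vanish only at an attained extremum on a set where $K_G>0$. The standard argument then moves $\xi_G$ into $(-1,1)$, by an intermediate-value argument applied to $f^{(2n)}$ on the support of $K_G$. The same reasoning applies to $K_L$ and produces $\xi_L$.

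Applying the representation to $f(x)=x^{2n}$, for which $f^{(2n)}\equiv(2n)!$, gives
\[
\int_{-1}^1K_G=\frac{E^G_n[x^{2n}]}{(2n)!},\qquad \int_{-1}^1K_L=\frac{E^L_n[x^{2n}]}{(2n)!}.
\]
So everything reduces to computing the two numbers $E^G_n[x^{2n}]$ and $E^L_n[x^{2n}]$.

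\textbf{Step 2 (Gauss constant).} Let $k_n=(2n)!/(2^n(n!)^2)$ be the leading coefficient of $P_n$, and let $\tilde P_n=P_n/k_n$ be the monic Legendre polynomial. Then $x^{2n}-\tilde P_n^2\in\Pi_{2n-1}$, and $\tilde P_n^2$ vanishes at the Gauss nodes. Hence
\[
G_n[x^{2n}]=\I[x^{2n}-\tilde P_n^2],
\qquad\text{so}\qquad
E^G_n[x^{2n}]=\int_{-1}^1\tilde P_n^2=\frac{2}{(2n+1)k_n^2}.
\]
Dividing by $(2n)!$ and substituting $k_n$ yields exactly
\[
c_n=\frac{2^{2n+1}(n!)^4}{(2n+1)((2n)!)^3}.
\]

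\textbf{Step 3 (Lobatto constant).} Let $q$ be the monic polynomial of degree $n-1$ proportional to $P_n'$, so that $q=P_n'/(nk_n)$. The polynomial $(x^2-1)q^2$ is monic of degree $2n$ and vanishes at all Lobatto nodes. Therefore $x^{2n}-(x^2-1)q^2\in\Pi_{2n-1}$ is integrated exactly by $L_{n+1}$, and
\[
E^L_n[x^{2n}]=L_{n+1}[x^{2n}]-\I[x^{2n}]=\int_{-1}^1(1-x^2)q^2.
\]
The key identity, which I regard as the main computational step, is
\[
\int_{-1}^1(1-x^2)(P_n')^2=\frac{2n(n+1)}{2n+1}.
\]
I would prove it by integrating by parts and using the Legendre equation $\bigl((1-x^2)P_n'\bigr)'=-n(n+1)P_n$ together with $\int P_n^2=2/(2n+1)$. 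This gives
\[
E^L_n[x^{2n}]=\frac{2(n+1)}{n(2n+1)k_n^2}=\frac{n+1}{n}\,E^G_n[x^{2n}],
\]
which is the claimed ratio. The alternative closed form for $c_n^L$ then follows by rewriting $(n!)^4/n=n^3\,((n-1)!)^4$.

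The only genuinely delicate point is the open-interval location of $\xi_G$ and $\xi_L$. For this I would first try the strict-positivity argument sketched in Step 1. The algebra in Steps 2 and 3 is routine once the Legendre identity above is in hand.
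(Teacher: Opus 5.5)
Your proof is correct. Its skeleton matches the paper's: the Peano representation of Lemma~\ref{lem:peano}, nonnegativity of the kernels, the integral mean value theorem, and reduction to the two numbers $\int K_G$ and $\int K_L$.

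The genuine difference is in how you identify those numbers. The paper quotes the classical Gauss--Legendre and Gauss--Lobatto error constants from Davis--Rabinowitz and Gautschi, and then checks $c_n^L=\frac{n+1}{n}c_n$ by comparing the two closed forms. You derive both constants directly: you test on $x^{2n}$, subtract the monic nodal polynomials $\tilde P_n^2$ and $(x^2-1)q^2$, and use $\int_{-1}^1(1-x^2)(P_n')^2=n(n+1)\int_{-1}^1P_n^2$. Your route is self-contained and avoids the index shift ($n$ versus $n+1$ Lobatto points) needed to read the Lobatto constant off the literature. It also explains the ratio $(n+1)/n$ structurally: it is the Legendre eigenvalue $n(n+1)$ divided by the $n^2$ from normalising $P_n'$. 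That same ratio is what drives the cancellation in Proposition~\ref{prop:Pn}. The paper's route is shorter but purely referential.

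On the open-interval point you are more careful than the paper, which takes $\xi_G,\xi_L\in(-1,1)$ for granted. Your sketch is easiest to make rigorous as a two-case argument.
\begin{itemize}
\item If the weighted mean of $f^{(2n)}$ lies strictly between its minimum and maximum on $[-1,1]$, the intermediate value theorem on the segment between a minimiser and a maximiser gives a point strictly inside that segment.
\item If it equals, say, the maximum $M$, then the continuous nonnegative function $(M-f^{(2n)})K_G$ has zero integral and so vanishes identically. Hence $f^{(2n)}=M$ on the relatively open set $\{K_G>0\}$, which meets $(-1,1)$ once it is nonempty.
\end{itemize}
Nonemptiness comes from $\int K_G=c_n>0$ and $\int K_L=c_n^L>0$, which are computed in your later steps. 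So your Step~1 tacitly uses Steps~2 and~3; this is not circular, but the dependence is worth stating explicitly.
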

\begin{proof}
By Lemma~\ref{lem:peano}, for $f\in C^{2n}([-1,1])$ one has
\[
\I[f]-G_n[f]=\int_{-1}^1 f^{(2n)}(t)\,K_G(t)\,\dd t,
\qquad
L_{n+1}[f]-\I[f]=\int_{-1}^1 f^{(2n)}(t)\,K_L(t)\,\dd t,
\]
where $K_G$ and $K_L$ are continuous and non-negative.
By the mean value theorem for integrals, there exist points $\xi_G,\xi_L\in(-1,1)$ such that
\[
\I[f]-G_n[f]= f^{(2n)}(\xi_G)\int_{-1}^1 K_G(t)\,\dd t,
\qquad
L_{n+1}[f]-\I[f]= f^{(2n)}(\xi_L)\int_{-1}^1 K_L(t)\,\dd t.
\]
Thus it suffices to identify the two kernel integrals.
For Gauss--Legendre, the classical error constant gives
\[
\int_{-1}^1 K_G(t)\,\dd t
=
\frac{2^{2n+1}(n!)^4}{(2n+1)\bigl((2n)!\bigr)^3}
=:c_n,
\]
see, for instance, Davis--Rabinowitz~\cite[Chapter~3]{DavisRabinowitz}.
For Gauss--Lobatto, the classical constant is
\[
\int_{-1}^1 K_L(t)\,\dd t
=
\frac{2^{2n+1}n^3(n+1)((n-1)!)^4}{(2n+1)\bigl((2n)!\bigr)^3}
=:c_n^L,
\]
see, for instance, Gautschi~\cite{Gautschi2000} or Davis--Rabinowitz~\cite[Section~2.7]{DavisRabinowitz}.
Since $(n!)^4=n^4((n-1)!)^4$, we have
\[
c_n^L=\frac{n+1}{n}c_n.
\]
Substituting these identities yields the stated mean value forms.
\end{proof}

Having a rigorous a~posteriori bound is essential for certification, but it is also useful to know what the estimator does in the
classical smooth regime.
The next identity isolates the leading error constant of $Q_n$ on the first non-exact monomial.
\begin{proposition}[Principal error constant of the certified estimator]\label{prop:Qn-asymptotic}
Let $Q_n=\tfrac34 G_n+\tfrac14 L_{n+1}$ as in Proposition~\ref{prop:Qn}.
Define the Peano kernel
\[
K_Q(t):=\frac34 K_G(t)-\frac14 K_L(t),\qquad t\in[-1,1],
\]
where $K_G,K_L$ are the kernels from Lemma~\ref{lem:peano}.
Then for every $f\in C^{2n}([-1,1])$ one has the exact representation
\[
\I[f]-Q_n[f]=\int_{-1}^1 f^{(2n)}(t)\,K_Q(t)\,\dd t.
\]
Moreover,
\begin{equation}\label{eq:Qn-leading-constant}
\int_{-1}^1 K_Q(t)\,\dd t
=
c_n\,\frac{2n-1}{4n},
\end{equation}
so that for the monomial $f(x)=x^{2n}$,
\[
\I[x^{2n}]-Q_n[x^{2n}]=(2n)!\,c_n\,\frac{2n-1}{4n}.
\]
In particular, $Q_n$ has the same formal order as $G_n$ and $L_{n+1}$, but its principal error constant is smaller than
that of $G_n$ by the factor $(2n-1)/(4n)$.
\end{proposition}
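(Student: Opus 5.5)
The proof will follow from linearity together with the Peano representations of Lemma~\ref{lem:peano}, followed by an integration against the monomial. First, the representation: since $Q_n=\tfrac34 G_n+\tfrac14 L_{n+1}$, we can write
\[
\I[f]-Q_n[f]=\tfrac34\bigl(\I[f]-G_n[f]\bigr)-\tfrac14\bigl(L_{n+1}[f]-\I[f]\bigr)=\tfrac34E^G_n[f]-\tfrac14E^L_n[f].
\]
Substituting the two kernel representations from Lemma~\ref{lem:peano} gives
\[
\I[f]-Q_n[f]=\int_{-1}^1 f^{(2n)}(t)\,K_Q(t)\,\dd t
\]
for every $f\in C^{2n}([-1,1])$. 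This step involves no difficulty; note in particular that no sign information about $K_Q$ is needed.

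For the integral of $K_Q$ I will invoke the kernel integrals identified in the proof of Proposition~\ref{prop:error-constants}, namely $\int K_G=c_n$ and $\int K_L=c_n^L=\frac{n+1}{n}c_n$. Linearity then gives
\[
\int_{-1}^1K_Q=\frac34c_n-\frac14\cdot\frac{n+1}{n}c_n=c_n\,\frac{3n-(n+1)}{4n}=c_n\,\frac{2n-1}{4n},
\]
which is \eqref{eq:Qn-leading-constant}. Next I apply the representation to $f(x)=x^{2n}$. Since $f^{(2n)}\equiv(2n)!$ is constant, the error equals $(2n)!\int K_Q$, which yields the monomial identity directly.

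The final claim compares errors on $x^{2n}$. For $G_n$ the error is $(2n)!\,c_n$, and for $Q_n$ it is $(2n)!\,c_n(2n-1)/(4n)$; both are nonzero for every $n\ge1$. Hence $Q_n$ is exact on $\Pi_{2n-1}$ but not on $\Pi_{2n}$, so it has the same formal order as $G_n$ and $L_{n+1}$, with principal constant reduced by the factor $(2n-1)/(4n)<1$.

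The only genuine input is the correctness of the two classical constants, and I expect it to be the main obstacle. If I wanted to verify them independently rather than cite them, I would compute $\I[x^{2n}]-G_n[x^{2n}]$ as $\int_{-1}^1 \tilde P_n^2$, where $\tilde P_n$ is the monic Legendre polynomial; this uses that $x^{2n}-\tilde P_n^2$ is a polynomial of degree at most $2n-1$ that vanishes at the Gauss nodes. The Lobatto analogue would use $-(1-x^2)\,\tilde q_{n-1}^2$, where $\tilde q_{n-1}$ is the monic polynomial proportional to $P_n'$, and it requires evaluating $\int_{-1}^1(1-x^2)(P_n')^2=\frac{2n(n+1)}{2n+1}$. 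That is the computation I would check most carefully, since the ratio $(n+1)/n$ depends on it.
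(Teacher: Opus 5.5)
Your proposal is correct and takes essentially the same approach as the paper: the kernel identity follows by linearity from Lemma~\ref{lem:peano}, and the constant comes from the classical values $\int K_G=c_n$ and $\int K_L=\frac{n+1}{n}c_n$ behind Proposition~\ref{prop:error-constants}; the paper evaluates the monomial $x^{2n}$ first and reads off $\int K_Q$, while you do it in the reverse order, which makes no difference. Your optional independent check is also sound, since $\int_{-1}^1(1-x^2)(P_n')^2\,\dd x=n(n+1)\int_{-1}^1P_n^2\,\dd x=\frac{2n(n+1)}{2n+1}$ by integration by parts and the Legendre equation, and this indeed yields the ratio $(n+1)/n$.
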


\begin{proof}
The kernel identity is immediate from Lemma~\ref{lem:peano} and linearity:
\[
\I[f]-Q_n[f]
=\frac34\bigl(\I[f]-G_n[f]\bigr)-\frac14\bigl(L_{n+1}[f]-\I[f]\bigr)
=\int_{-1}^1 f^{(2n)}(t)\Bigl(\frac34 K_G(t)-\frac14 K_L(t)\Bigr)\,\dd t.
\]
To compute the integral of $K_Q$, apply the preceding identity to $f(x)=x^{2n}$.
Since $f^{(2n)}\equiv (2n)!$ is constant, one obtains
\[
\I[x^{2n}]-Q_n[x^{2n}]=(2n)!\int_{-1}^1 K_Q(t)\,\dd t.
\]
On the other hand, Proposition~\ref{prop:error-constants} yields
$\I[x^{2n}]-G_n[x^{2n}]=(2n)!\,c_n$ and $L_{n+1}[x^{2n}]-\I[x^{2n}]=\frac{n+1}{n}(2n)!\,c_n$.
Substituting into the definition of $Q_n$ gives
\[
\I[x^{2n}]-Q_n[x^{2n}]
=\frac34(2n)!\,c_n-\frac14\frac{n+1}{n}(2n)!\,c_n
=(2n)!\,c_n\,\frac{2n-1}{4n},
\]
which implies \eqref{eq:Qn-leading-constant}.
\end{proof}

\begin{proposition}[A degree-$2n+1$ combination]\label{prop:Pn}
Define
\[
P_n[f] := \frac{n+1}{2n+1}G_n[f]+\frac{n}{2n+1}L_{n+1}[f].
\]
Then $P_n$ is exact on $\Pi_{2n+1}$.
\end{proposition}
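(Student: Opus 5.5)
The plan is to show that the error functional $\I-P_n$ annihilates $\Pi_{2n+1}$. Since $G_n$ and $L_{n+1}$ are already exact on $\Pi_{2n-1}$, and the weights satisfy $\frac{n+1}{2n+1}+\frac{n}{2n+1}=1$, the functional
\[
\I-P_n=\frac{n+1}{2n+1}\bigl(\I-G_n\bigr)-\frac{n}{2n+1}\bigl(L_{n+1}-\I\bigr)
=\frac{n+1}{2n+1}E^G_n-\frac{n}{2n+1}E^L_n
\]
vanishes on $\Pi_{2n-1}$. It remains to check the two monomials $x^{2n}$ and $x^{2n+1}$.

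For $x^{2n+1}$ we use symmetry. Both $G_n$ and $L_{n+1}$ have node sets symmetric about the origin with equal weights at symmetric nodes: the zeros of $P_n$ and of $(1-x^2)P_n'$ are symmetric, and the weights are determined uniquely by exactness. Hence both rules vanish on odd functions, as does $\I$, so $E^G_n[x^{2n+1}]=E^L_n[x^{2n+1}]=0$.

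For $x^{2n}$ we invoke Proposition~\ref{prop:error-constants}, applied with $f^{(2n)}\equiv(2n)!$ so that the mean-value points play no role. This gives $E^G_n[x^{2n}]=(2n)!\,c_n$ and $E^L_n[x^{2n}]=\frac{n+1}{n}(2n)!\,c_n$. Therefore
\[
(\I-P_n)[x^{2n}]=\frac{(2n)!\,c_n}{2n+1}\Bigl((n+1)-n\cdot\frac{n+1}{n}\Bigr)=0.
\]
The coefficients of $P_n$ are precisely the solution of this cancellation together with the normalisation that the weights sum to one. By linearity, $P_n$ is then exact on $\Pi_{2n+1}$.

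The main obstacle is the reliance on the exact ratio $c_n^L/c_n=(n+1)/n$, which the paper quotes from the literature. If a self-contained check is wanted, I would verify it directly. For Gauss, write $x^{2n}=P_n(x)^2/k_n^2+q(x)$ with $q\in\Pi_{2n-1}$, where $k_n$ is the leading coefficient of $P_n$. This gives $E^G_n[x^{2n}]=\|P_n\|^2/k_n^2$. For Lobatto, write $x^{2n}=-(1-x^2)P_n'(x)^2/(n^2k_n^2)+r(x)$ with $r\in\Pi_{2n-1}$. The first term vanishes at all Lobatto nodes, so $E^L_n[x^{2n}]=\int_{-1}^1(1-x^2)P_n'(x)^2\,\dd x/(n^2k_n^2)=n(n+1)\|P_n\|^2/(n^2k_n^2)$, using the Legendre identity $\int(1-x^2)P_n'^2=n(n+1)\|P_n\|^2$. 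This yields the ratio $(n+1)/n$ directly.
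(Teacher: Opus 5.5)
Your proof is correct and follows the paper's argument essentially step for step: exactness on $\Pi_{2n-1}$ from the two rules, symmetry for $x^{2n+1}$, and the error-constant ratio $(n+1)/n$ from Proposition~\ref{prop:error-constants} to cancel the $x^{2n}$ error. Your optional direct check of that ratio, via $x^{2n}=P_n^2/k_n^2+q$, $x^{2n}=-(1-x^2)P_n'^2/(n^2k_n^2)+r$ and $\int_{-1}^1(1-x^2)P_n'(x)^2\,\dd x=n(n+1)\|P_n\|^2$, is also correct and usefully removes the dependence on the constants the paper quotes from the literature.
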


\begin{proof}
Both $G_n$ and $L_{n+1}$ are exact on $\Pi_{2n-1}$, hence so is $P_n$.
Because the rules are symmetric, they are also exact for every odd polynomial, in particular for $x^{2n+1}$.
It therefore suffices to check exactness on $x^{2n}$.
Writing $\I[x^{2n}]-G_n[x^{2n}]=\gamma$ and $L_{n+1}[x^{2n}]-\I[x^{2n}]=\frac{n+1}{n}\gamma$ (Proposition~\ref{prop:error-constants}),
we find
\[
\I[x^{2n}]-P_n[x^{2n}]
= \frac{n+1}{2n+1}\gamma-\frac{n}{2n+1}\cdot\frac{n+1}{n}\gamma
=0.
\]
\end{proof}

\begin{remark}
The rule $P_n$ is the standard degree-raising cancellation obtained by combining two symmetric rules of the same precision so that their leading $x^{2n}$ errors cancel.
It can be viewed as a very small ``extrapolation'' based solely on $G_n$ and $L_{n+1}$.
Unlike $Q_n$, it is not certified on the class of $(2n-1)$-convex functions because it is not constrained to the refined interval.
\end{remark}

\subsection{Analytic functions and complex error kernels}

For analytic integrands, quadrature errors exhibit geometric convergence governed by complex Peano kernels.
A typical bound replaces the real integral of $K_Q$ by a contour integral over a Bernstein ellipse or another domain of analyticity, so the error is controlled by the maximum of $|f|$ on the contour and by the size of the corresponding complex kernel.
Hunter and Nikolov \cite{HunterNikolov1999} study this error term for symmetric Gauss--Lobatto formulas and analyse the location of maxima of the associated complex kernel.
Such analytic estimates complement the present work: they can be extremely sharp when a trustworthy analyticity domain is available, whereas our certified inequalities are shape-driven and hold without analyticity.
The a posteriori estimates in the preceding section should, moreover, be read in the light of Avdzhieva--Nikolov \cite[Theorem~1]{AvdzhievaNikolov2017}: the general definite-functional principle already explains how appropriate linear combinations yield such estimates.

\section{Even order: Radau quadrature kernels and an obstruction}\label{sec:even}

Odd-order convexity interacts with the symmetric Gauss/Gauss--Lobatto pair.
Even order is different: the extremal rules are one-sided Radau rules, and the symmetric midpoint of those two rules cannot have a fixed sign on the cone.

\subsection{The Radau bracket for \texorpdfstring{$2n$}{2n}-convexity}

\begin{theorem}[Radau bracket]\label{thm:radau-bracket}
Let $n\ge 1$ and let $f:[-1,1]\to\R$ be $2n$-convex.
Then
\[
R^{\ell}_{n+1}[f]\le \I[f]\le R^{r}_{n+1}[f].
\]
If $f$ is $2n$-concave, the inequalities reverse.
\end{theorem}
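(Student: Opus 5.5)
Following Remark~\ref{rem:reduction}, I will use Theorem~\ref{thm:popoviciu} with $m=2n+1$ to reduce the statement to truncated powers. A $2n$-convex $f$ has the form $f=p+\int_{-1}^1(\cdot-t)_+^{2n}\,\mu(\dd t)$ with $p\in\Pi_{2n}$ and $\mu\ge0$. Both functionals $\I-R^\ell_{n+1}$ and $R^r_{n+1}-\I$ annihilate $\Pi_{2n}$. It therefore suffices to show, for every $t\in[-1,1]$,
\[
R^{\ell}_{n+1}[\phi_t]\le \I[\phi_t]\le R^{r}_{n+1}[\phi_t],\qquad \phi_t(x):=(x-t)_+^{2n}.
\]
The concave case then follows by passing to $-f$.

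For each fixed $t$ I plan an interpolation argument of Hermite type, the standard route to definiteness of Gauss-type rules. Take the right rule first, with nodes $\eta_0<\dots<\eta_{n-1}<1$. I will construct $h\in\Pi_{2n}$ interpolating $\phi_t$ at the $\eta_i$ with multiplicity two and at $1$ with multiplicity one. That is $2n+1$ conditions, so $h$ exists uniquely. Since $R^r_{n+1}$ is exact on $\Pi_{2n}$ and uses only these node values, $R^r_{n+1}[\phi_t]=R^r_{n+1}[h]=\I[h]$. Hence $R^r_{n+1}[\phi_t]-\I[\phi_t]=\I[h-\phi_t]$, and it remains to show $h-\phi_t\ge0$ on $[-1,1]$.

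The error $e=\phi_t-h$ vanishes to order at least two at each $\eta_i$ and to order one at $1$. Suppose $e$ had a further zero in $[-1,1)$ away from the nodes. Rolle's theorem would then produce too many sign changes of $e^{(2n)}$. This derivative equals $(2n)!\,\mathbf 1_{x>t}-(2n)!\,a$, where $a$ is the leading coefficient of $h$, a step function with at most one sign change. Counting zeros, I expect $e$ to have no zeros beyond those prescribed. Its sign is then fixed by the behaviour near $x=1^-$, or equivalently by the sign of $e^{(2n)}$ on the last piece. The expected conclusion is $e\le0$, that is, $h\ge\phi_t$, which gives $R^r_{n+1}[\phi_t]\ge\I[\phi_t]$. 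For the left rule I will repeat the argument with a simple node at $-1$ and double nodes at $\xi_1,\dots,\xi_n$. The sign of $e$ is now read off near $-1^+$ and comes out opposite, giving $R^\ell_{n+1}[\phi_t]\le\I[\phi_t]$.

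I expect the main obstacle to be this Rolle/sign bookkeeping. Because $\phi_t$ is only $C^{2n-1}$, one must carefully justify the zero count up to the $(2n)$-th derivative, where a jump at $t$ occurs. One must also handle the degenerate cases where $t$ coincides with a node, or $t$ lies below all nodes so that $\phi_t$ is a polynomial and both errors vanish. If this becomes messy, I will fall back on a shorter route. That route uses the positivity of the weights and degree of precision $2n$ as recorded in Section~\ref{sec:quadrature}, together with Wąsowicz's extremality for $2n$-convex functions~\cite{Wasowicz2010Extremalities}: the integral is a positive operator exact on $\Pi_{2n}$, so it lies between the two Radau rules. This mirrors exactly how Theorem~\ref{thm:classic-bracket} is used in Lemma~\ref{lem:peano}.
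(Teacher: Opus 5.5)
Your architecture matches the paper's. You reduce via Theorem~\ref{thm:popoviciu} with $m=2n+1$ to $\phi_t=(\cdot-t)_+^{2n}$, and you use the fact that both Radau rules are definite of order $2n+1$. The paper's sketch only cites this definiteness, together with W\k{a}sowicz's extremality, which is your fallback. Your Hermite step tries to prove it, and the identity $R^{r}_{n+1}[\phi_t]-\I[\phi_t]=\I[h-\phi_t]$ is the right start.

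\emph{Where the zero count fails.} As you describe it, the count breaks down for a whole range of $t$.
\begin{itemize}
\item For the right rule and $-1<t\le\eta_0$, $\phi_t$ agrees with $(\cdot-t)^{2n}$ on the nodes but not on $[-1,1]$. Hence $h=(\cdot-t)^{2n}$ and $e=\phi_t-h=-(\cdot-t)_-^{2n}$, which vanishes on all of $[t,1]$. The error is $(1+t)^{2n+1}/(2n+1)>0$, not $0$ as your degenerate-case remark claims.
\item Symmetrically, for the left rule and $t\ge\xi_n$ you get $h\equiv 0$ and $e=\phi_t$, which vanishes on $[-1,t]$.
\end{itemize}
In these ranges $e$ really has extra zeros, and Rolle yields no contradiction. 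Three distinct zeros of the continuous piecewise-linear $e^{(2n-1)}$ only force it to vanish on one of its two pieces, i.e.\ $a\in\{0,1\}$. They do not force two sign changes of $e^{(2n)}$. Your sign-fixing sentence (``equivalently by the sign of $e^{(2n)}$ on the last piece'') is also asserted rather than argued. The conclusions $\phi_t\le h$ (right) and $\phi_t\ge h$ (left) are true, but the argument as written does not establish them.

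\emph{The repair.} Drop Rolle and use the Newton form of the Hermite remainder, which is valid because $\phi_t\in C^{2n-1}\subset C^1$:
\[
\phi_t(x)-h(x)=\omega_r(x)\,[\eta_0,\eta_0,\dots,\eta_{n-1},\eta_{n-1},1,x;\phi_t],
\qquad
\omega_r(x)=(x-1)\prod_{i=0}^{n-1}(x-\eta_i)^2\le 0 .
\]
Only first derivatives enter, so this confluent divided difference is a limit of divided differences of order $2n+1$ at distinct points. It is therefore non-negative by the $2n$-convexity of $\phi_t$. Thus $\phi_t\le h$ on $[-1,1]$ for every $t\in[-1,1]$, with no case distinctions.

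For the left rule, $\omega_\ell(x)=(x+1)\prod_{i=1}^{n}(x-\xi_i)^2\ge 0$ gives $\phi_t\ge h$, hence $\I[\phi_t]\ge R^{\ell}_{n+1}[\phi_t]$. This is the standard proof of the definiteness the paper invokes. With this fix your proposal becomes a self-contained version of the paper's sketch.
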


\begin{proof}[Proof sketch]
The functionals
\[
E^{\ell}[f]=\I[f]-R^{\ell}_{n+1}[f],
\qquad
E^{r}[f]=R^{r}_{n+1}[f]-\I[f]
\]
annihilate $\Pi_{2n}$, because the Radau rules are exact on $\Pi_{2n}$.
Their Peano kernels of order $2n+1$ are non-negative; equivalently the Radau rules are definite of this order.
By the reduction principle in Remark~\ref{rem:reduction}, this gives the asserted bracket for $2n$-convex functions, and the concave case follows from $f\mapsto -f$.
See W\k{a}sowicz~\cite{Wasowicz2010Extremalities} for the corresponding general extremality theorem, and Davis--Rabinowitz~\cite[Section~2.7]{DavisRabinowitz} or Brass--Petras~\cite[Chapter~3]{BrassPetrasBook} for the quadrature background.
\end{proof}

\subsection{Why the Radau midpoint cannot refine the bracket}

\begin{definition}[Radau quadrature kernel]\label{def:radau-kernel}
For $n\ge 1$, define
\[
M^R_n[f]:=\frac12\bigl(R^{\ell}_{n+1}[f]+R^{r}_{n+1}[f]\bigr).
\]
Since each Radau rule is exact on $\Pi_{2n}$, the Peano kernel of $M^R_n-\I$ of order $2n+1$ is
\[
K^R_n(t):=\frac{1}{(2n)!}\,(M^R_n-\I)\bigl[(\cdot-t)_+^{2n}\bigr].
\]
Thus $(M^R_n-\I)[f]=\int_{-1}^1 f^{(2n+1)}(t)K^R_n(t)\,\mathrm{d}t$ whenever $f\in C^{2n+1}$.
\end{definition}

\begin{theorem}[Oddness and sign change]\label{thm:radau-odd}
For every $n\ge 1$, the Radau quadrature kernel $K^R_n$ is odd and non-zero.
Consequently it changes sign on $(-1,1)$.
\end{theorem}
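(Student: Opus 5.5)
The plan is to get oddness from the reflection $x\mapsto -x$, which exchanges the two Radau rules. Let $Sf(x):=f(-x)$. Since the nodes and weights of $R^{r}_{n+1}$ are the reflections of those of $R^{\ell}_{n+1}$ (by uniqueness of the Radau rule with prescribed endpoint and exactness on $\Pi_{2n}$), we have $R^{r}_{n+1}[f]=R^{\ell}_{n+1}[Sf]$. Also $\I[Sf]=\I[f]$. Hence $M^R_n[Sf]=M^R_n[f]$, so the functional $\Lambda:=M^R_n-\I$ is $S$-invariant.

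To transfer this to the kernel, I will use the identity $(x-t)_+^{2n}+(-1)^{2n+1}(t-x)_+^{2n}=(x-t)^{2n}$. Since the exponent $2n$ is even, this says $(x-t)_+^{2n}+(t-x)_+^{2n}=(x-t)^{2n}$. The right-hand side is a polynomial of degree $2n$, so $\Lambda$ annihilates it. Therefore
\[
\Lambda\bigl[(\cdot-t)_+^{2n}\bigr]=-\Lambda\bigl[(t-\cdot)_+^{2n}\bigr].
\]
Now $(t-x)_+^{2n}=(-x-(-t))_+^{2n}$ is $S$ applied to $x\mapsto (x-(-t))_+^{2n}$. By $S$-invariance, $\Lambda[(t-\cdot)_+^{2n}]=\Lambda[(\cdot+t)_+^{2n}]=(2n)!\,K^R_n(-t)$. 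Combining the two gives $K^R_n(t)=-K^R_n(-t)$, so the kernel is odd.

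For non-vanishing, I will use $\int_{-1}^1K^R_n=0$, which follows from oddness, so it cannot be used for this. Instead I will use a monomial of degree $2n+1$ paired with a weight. The cleanest route is to evaluate $\Lambda$ on $f(x)=x^{2n+1}$. By oddness of $f$ and the reflection symmetry, $\Lambda[f]=0$ as well, so that is useless too. Instead I will take $f(x)=(x-t_\varepsilon)_+^{2n}$ with $t_\varepsilon=1-\varepsilon$ near the right endpoint, as in the proof of Proposition~\ref{prop:sharpness}. For small $\varepsilon$ only the node $1$ of $R^{r}_{n+1}$ sees $f$, so
\[
(2n)!\,K^R_n(t_\varepsilon)=\tfrac12\beta_n\varepsilon^{2n}-\frac{\varepsilon^{2n+1}}{2n+1}.
\]
Here $\beta_n>0$, since Gauss-type rules have positive weights, or directly $\beta_n=2/(n+1)^2$. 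This quantity is strictly positive for small $\varepsilon$. By oddness $K^R_n(-t_\varepsilon)<0$, so $K^R_n$ is non-zero and changes sign on $(-1,1)$.

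The only step needing care is justifying the reflection identity $R^{r}_{n+1}=R^{\ell}_{n+1}\circ S$. I will do this via uniqueness: $f\mapsto R^{\ell}_{n+1}[Sf]$ is an $(n+1)$-point rule with node $1$ that is exact on $\Pi_{2n}$, because $S$ preserves $\Pi_{2n}$ and $\I$. Uniqueness of such a rule (its interior nodes are forced to be the zeros of the appropriate Jacobi-type orthogonal polynomial) then identifies it with $R^{r}_{n+1}$.
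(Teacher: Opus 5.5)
Your proposal is correct and follows the paper's proof. In both, invariance of $M^R_n-\I$ under $x\mapsto -x$, combined with $(x-t)_+^{2n}+(t-x)_+^{2n}=(x-t)^{2n}\in\Pi_{2n}$, gives oddness. The endpoint test function $(\cdot-(1-\varepsilon))_+^{2n}$, which only the node $1$ of $R^{r}_{n+1}$ sees, makes the kernel positive near $1$ and hence negative near $-1$.

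Two cosmetic fixes: the general identity you first quote should carry $(-1)^{2n}$, not $(-1)^{2n+1}$ (the even-exponent form you then use is right). Also delete the abandoned attempts via $\int_{-1}^1 K^R_n=0$ and $x^{2n+1}$.
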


\begin{proof}
Write $M=M^R_n$ and let $\rho(x)=-x$.
The reflection $\rho$ maps the nodes of $R^{\ell}_{n+1}$ to the nodes of $R^{r}_{n+1}$ and preserves the corresponding weights, since the two Radau rules are obtained from each other by this reflection.
Thus $R^{\ell}_{n+1}[f\circ\rho]=R^{r}_{n+1}[f]$ and $R^{r}_{n+1}[f\circ\rho]=R^{\ell}_{n+1}[f]$.
Consequently $M[f\circ\rho]=M[f]$, and $\I[f\circ\rho]=\I[f]$ as well.
For $\phi_t(x)=(x-t)_+^{2n}$, reflection gives
\[
(M-\I)[\phi_t]=(M-\I)\bigl[(\cdot+t)_-^{2n}\bigr],
\]
where $u_-:=\max\{-u,0\}$.
Since
\[
(\cdot+t)_+^{2n}+(\cdot+t)_-^{2n}=(\cdot+t)^{2n}\in\Pi_{2n},
\]
exactness of $M$ on $\Pi_{2n}$ implies
\[
(M-\I)\bigl[(\cdot+t)_+^{2n}\bigr]
=-(M-\I)\bigl[(\cdot+t)_-^{2n}\bigr].
\]
Therefore $K^R_n(-t)=-K^R_n(t)$.

It remains only to see that the kernel is not identically zero.
Let $\beta_n=2/(n+1)^2$ be the weight of the right Radau rule at $1$.
Choose $t<1$ larger than all other Radau nodes.
Then only the endpoint $1$ contributes to $M[(\cdot-t)_+^{2n}]$, so
\[
(M-\I)[(\cdot-t)_+^{2n}]
=(1-t)^{2n}\left(\frac{\beta_n}{2}-\frac{1-t}{2n+1}\right),
\]
which is positive for $t$ sufficiently close to $1$.
By oddness, the kernel is negative near $-1$.
\end{proof}

\begin{corollary}[No universal refinement in even order]\label{cor:no-even-refinement}
Fix $n\ge 1$.
There is no sign choice $\sigma\in\{-1,+1\}$ such that
\[
\sigma\bigl(M^R_n[f]-\I[f]\bigr)\ge 0
\quad\text{for every $2n$-convex }f.
\]
\end{corollary}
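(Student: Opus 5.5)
The plan is to test the sign condition on truncated powers, the extremal elements of the cone in Theorem~\ref{thm:popoviciu}, and to read off their values from the Radau quadrature kernel. For each $t\in(-1,1)$, the function $\phi_t(x):=(x-t)_+^{2n}$ is $2n$-convex. Indeed, its distributional derivative of order $2n+1$ is $(2n)!\,\delta_t\ge 0$; equivalently, it is the case $\mu=\delta_t$, $p=0$ of the representation \eqref{eq:popoviciu} with $m=2n+1$. The same justification was used for $(x-t)_+^{2n-1}$ in the proof of Lemma~\ref{lem:peano}. By Definition~\ref{def:radau-kernel},
\[
M^R_n[\phi_t]-\I[\phi_t]=(2n)!\,K^R_n(t).
\]

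Next I invoke Theorem~\ref{thm:radau-odd}: $K^R_n$ is odd and not identically zero, and its proof exhibits explicit points. For $t$ close to $1$ (beyond all other Radau nodes), $K^R_n(t)>0$, by the computation
\[
(1-t)^{2n}\Bigl(\frac{\beta_n}{2}-\frac{1-t}{2n+1}\Bigr)>0 .
\]
By oddness, $K^R_n(-t)=-K^R_n(t)<0$. So I fix such a $t_0\in(0,1)$ and obtain two $2n$-convex functions $\phi_{t_0}$ and $\phi_{-t_0}$ with
\[
M^R_n[\phi_{t_0}]-\I[\phi_{t_0}]>0,\qquad M^R_n[\phi_{-t_0}]-\I[\phi_{-t_0}]<0 .
\]

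Finally, suppose some $\sigma\in\{-1,+1\}$ satisfied $\sigma(M^R_n[f]-\I[f])\ge0$ for all $2n$-convex $f$. If $\sigma=+1$, then $f=\phi_{-t_0}$ gives a contradiction; if $\sigma=-1$, then $f=\phi_{t_0}$ does. This proves the corollary.

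The only point needing care is that $\phi_{\pm t_0}$ are genuinely $2n$-convex in the divided-difference sense of Definition~\ref{def:higherconv}. I would handle this by the same appeal as in Lemma~\ref{lem:peano}. Alternatively, one can use the $C^{2n}$ and piecewise-polynomial structure via the recursive characterisation: the $2n$-th derivative $(2n)!\,\mathbf{1}_{x>t}$ is nondecreasing, so the divided differences of order $2n+1$ are non-negative. One could also note that strict sign values at single points suffice, so no continuity argument for $K^R_n$ is needed beyond what Theorem~\ref{thm:radau-odd} already provides.
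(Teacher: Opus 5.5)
Your proof is correct and is essentially the paper's argument: you test the sign condition on the $2n$-convex truncated powers $(\cdot-t)_+^{2n}$, whose values under $M^R_n-\I$ are $(2n)!\,K^R_n(t)$, and invoke the sign change of $K^R_n$ from Theorem~\ref{thm:radau-odd}; you also rightly note that this non-existence statement needs only the $2n$-convexity of the test functions, not the representation theorem. One small slip in your optional alternative: $(\cdot-t)_+^{2n}$ is only $C^{2n-1}$, not $C^{2n}$, so that route should be phrased as convexity of $f^{(2n-1)}=(2n)!\,(\cdot-t)_+$ (equivalently, a nondecreasing one-sided $2n$-th derivative).
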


\begin{proof}
By the representation theorem, it suffices to test the extremal functions $(\cdot-t)_+^{2n}$.
For these functions the value of $M^R_n-\I$ is $(2n)!K^R_n(t)$, which changes sign by Theorem~\ref{thm:radau-odd}.
\end{proof}

\begin{remark}
This obstruction is structural rather than numerical.
Equivalently, one may phrase it as the fact that a definite quadrature formula of odd Peano order is necessarily one-sided and cannot be symmetric.
For the lowest orders, related examples were analysed by Komisarski--W\k{a}sowicz~\cite{KomisarskiWasowicz2017}; the kernel argument above gives the obstruction for all $n$.
\end{remark}

\section{Numerical experiments}\label{sec:numerics}

The refined estimate is a certification result, not a claim that a low-order formula beats unconstrained high-order quadrature on every smooth test function when the exact answer is known.
The relevant numerical question is different: does the adaptive composite estimate from Proposition~\ref{prop:composite} give a practical, computable certificate on integrands that arise naturally in the applications mentioned in the introduction?
We therefore report tests for spline-type truncated powers and for Stieltjes kernels, and we include an explicit cost comparison with adaptive Simpson quadrature.

All experiments were generated by the companion code (version \codeVersion{}):
\begin{center}
\codeRepo{}
\end{center}
The repository contains the reusable Python package, unit tests, and notebooks for regenerating the tables and kernel plots.
Function-evaluation counts below are counts of distinct abscissae after caching reused endpoints and previously evaluated subdivision nodes.

\subsection{A spline-basis diagnostic}

Truncated powers are the basic local building blocks in spline approximation:
\[
\psi_{c,r}(x)=(x-c)_+^r .
\]
For the first genuinely new case in this paper, $n=4$, the function
\[
\psi_{0.37,7}(x)=(x-0.37)_+^7,
\qquad 0\le x\le 1,
\]
is $7$-convex but is not $C^8$.
Thus the usual smooth Gauss error constant involving a bounded eighth derivative is not available; distributionally, the eighth derivative is a positive point mass at the knot.
At the same time this example should not be overinterpreted as a difficult integration problem: if the knot is known, then
\[
\int_0^1 (x-0.37)_+^7\,\mathrm{d}x
 =\int_{0.37}^1 (x-0.37)^7\,\mathrm{d}x
 =\frac{0.63^8}{8},
\]
and the last polynomial integral is evaluated exactly by a four-point Gauss rule after splitting at the knot.
We therefore use this example only as a diagnostic for the shape certificate.
The greedy certified $Q_4$ integrator reaches tolerance $10^{-8}$ using two subintervals and $23$ distinct function evaluations; the certified bound is $2.382\times 10^{-9}$ and the actual error is $1.614\times10^{-9}$.
For tolerance $10^{-10}$ it uses three subintervals and $37$ evaluations, with certified bound $8.177\times10^{-11}$ and actual error $6.553\times10^{-11}$.

\subsection{Near-pole Stieltjes kernels and adaptive cost}

A more informative class for certification is provided by Stieltjes-type kernels
\[
f_\delta(x)=\frac{1}{x+\delta},
\qquad 0\le x\le 1,
\qquad \delta>0.
\]
These functions satisfy $f_\delta^{(8)}(x)>0$, so they are $7$-convex and the $Q_4$ certificate applies on every subinterval.
The exact integral is
\[
\int_0^1 \frac{\mathrm{d}x}{x+\delta}=\log\frac{1+\delta}{\delta}.
\]
When $\delta$ is small, the pole at $-\delta$ is close to the interval and global smoothness constants are very pessimistic, even though the shape condition remains simple.

Table~\ref{tab:stieltjes-adaptive} compares the greedy certified $Q_4$ strategy with the classical adaptive Simpson routine of Lyness type; see Lyness~\cite{Lyness1969} and the survey of Gonnet~\cite{Gonnet2012}.
Both methods were run with absolute tolerance $10^{-8}$.
For $Q_4$ the error column labelled ``cert./true'' gives the rigorous bound from Proposition~\ref{prop:composite} followed by the actual error against the closed form integral.
For Simpson the corresponding pair is the usual Simpson error indicator followed by the actual error; the Simpson indicator is included as a standard adaptive comparison, but it is not a higher-order-convexity certificate.

\begin{table}[H]
\centering
\small
\begin{tabular}{r r r c r r c}
\toprule
& \multicolumn{3}{c}{certified greedy $Q_4$} & \multicolumn{3}{c}{adaptive Simpson} \\
\cmidrule(lr){2-4}\cmidrule(lr){5-7}
$\delta$ & subints & evals & cert./true error & subints & evals & indicator/true error \\
\midrule
$10^{-2}$ & 16 & 219 & $9.132\!\times\!10^{-9}/7.067\!\times\!10^{-9}$ & 189  & 757   & $3.388\!\times\!10^{-9}/5.261\!\times\!10^{-12}$ \\
$10^{-4}$ & 37 & 513 & $7.890\!\times\!10^{-9}/6.109\!\times\!10^{-9}$ & 789  & 3157  & $3.326\!\times\!10^{-9}/5.073\!\times\!10^{-12}$ \\
$10^{-6}$ & 58 & 807 & $7.776\!\times\!10^{-9}/6.023\!\times\!10^{-9}$ & 2664 & 10657 & $3.334\!\times\!10^{-9}/5.087\!\times\!10^{-12}$ \\
\bottomrule
\end{tabular}
\caption{Adaptive integration of $f_\delta(x)=1/(x+\delta)$ on $[0,1]$ at tolerance $10^{-8}$.
The $Q_4$ bound is a rigorous shape-based certificate for $7$-convex integrands; the Simpson indicator is the standard embedded adaptive Simpson estimate and is shown only for cost comparison.}
\label{tab:stieltjes-adaptive}
\end{table}

The table shows the expected behaviour.
As the pole approaches the interval, both algorithms refine near the left endpoint.
The shape-based $Q_4$ certificate remains within a small factor of the true error and needs substantially fewer function evaluations than this standard adaptive Simpson implementation for the three reported values of $\delta$.
Adaptive Simpson happens to deliver a much smaller true error in this example, but it does so with a much larger evaluation count and without using the higher-order convexity certificate.

\subsection{Kernel plots}

To complement the theory, Figure~\ref{fig:peano-kernels} plots the Peano kernels $K_G$ and $K_L$ and their half-difference
for $n=4$, illustrating the dominance $K_G\le K_L$.
Figure~\ref{fig:radau-kernel} plots the Radau quadrature kernel $K^R_2$, illustrating its oddness and sign change.

\begin{figure}[H]
\centering
\includegraphics[width=0.76\linewidth]{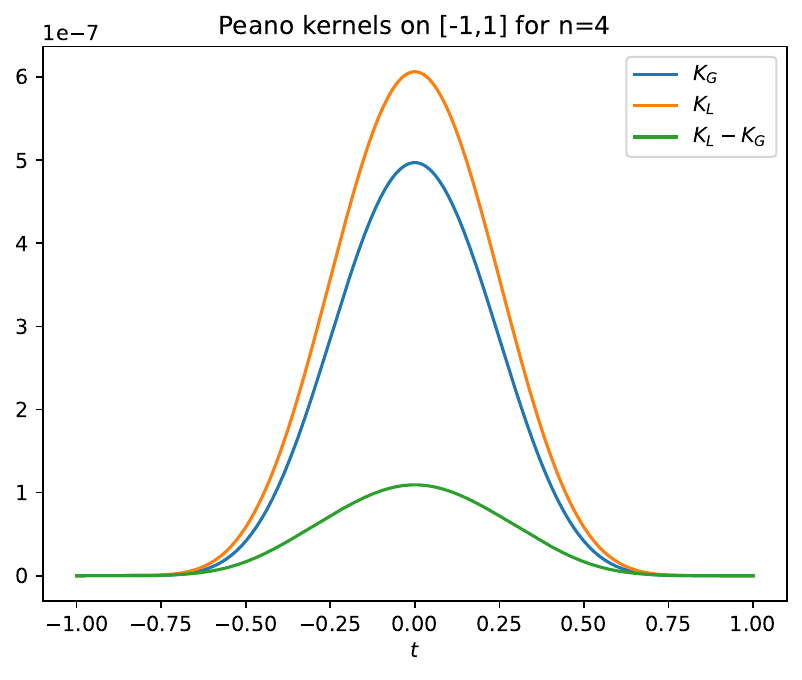}
\caption{Peano kernels on $[-1,1]$ for $n=4$: the Gauss kernel $K_G$, the Lobatto kernel $K_L$, and their half-difference.
The pointwise dominance $0\le K_G\le K_L$ is visible.}
\label{fig:peano-kernels}
\end{figure}

\begin{figure}[H]
\centering
\includegraphics[width=0.76\linewidth]{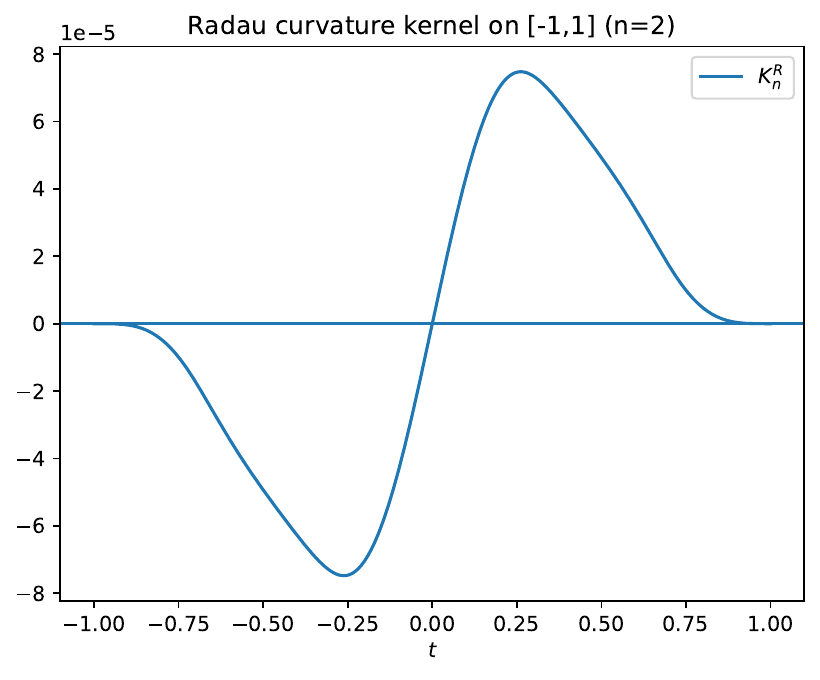}
\caption{The Radau quadrature kernel $K^R_n$ for $n=2$.
The kernel is odd and therefore must change sign, which obstructs a one-sided refinement of the Radau bracket in even order.}
\label{fig:radau-kernel}
\end{figure}

\subsection*{Acknowledgements}
The authors thank the anonymous referees for their careful reading of the manuscript and for their constructive comments and suggestions, which helped improve the exposition and strengthen several arguments.

\bibliographystyle{plain}

\begin{thebibliography}{99}
\bibitem{AvdzhievaNikolov2017}
A.~Avdzhieva and G.~Nikolov.
\newblock Asymptotically optimal definite quadrature formulae of fourth order.
\newblock {\em Journal of Computational and Applied Mathematics}, 311:565--582, 2017.
\newblock \url{https://doi.org/10.1016/j.cam.2016.08.022}

\bibitem{BalabdaouiWellner2007}
F.~Balabdaoui and J.~A. Wellner.
\newblock Estimation of a $k$-monotone density: limit distribution theory and the spline connection.
\newblock {\em The Annals of Statistics}, 35(6):2536--2564, 2007.
\newblock \url{https://doi.org/10.1214/009053607000000262}

\bibitem{BessenyeiPales2002}
M.~Bessenyei and Z.~Páles.
\newblock Higher-order convexity and {H}ermite--{H}adamard-type inequalities.
\newblock {\em Publ. Math. Debrecen}, 61(3--4):623--643, 2002.
\newblock \url{https://doi.org/10.5486/PMD.2002.2706}

\bibitem{BessenyeiPales2010}
M.~Bessenyei and Z.~Páles.
\newblock Characterization of higher-order monotonicity via integral inequalities.
\newblock {\em Proc. Roy. Soc. Edinburgh Sect. A}, 140(4):723--736, 2010.
\newblock \url{https://doi.org/10.1017/S0308210509001188}

\bibitem{BoasWidder1940}
R.~P. Boas Jr. and D.~V. Widder.
\newblock Functions with positive differences.
\newblock {\em Duke Math. J.}, 7(1):496--503, 1940.
\newblock \url{https://doi.org/10.1215/S0012-7094-40-00729-3}

\bibitem{BrassPetrasBook}
Helmut Brass and Knut Petras.
\newblock {\em Quadrature theory}, volume 178 of {\em Mathematical Surveys and
  Monographs}.
\newblock American Mathematical Society, Providence, RI, 2011.
\newblock The theory of numerical integration on a compact interval.


\bibitem{BrassSchmeisser1981}
H.~Brass and G.~Schmeisser.
\newblock Error estimates for interpolatory quadrature formulae.
\newblock {\em Numerische Mathematik}, 37:371--386, 1981.
\newblock \url{https://doi.org/10.1007/BF01400316}

\bibitem{DavisRabinowitz}
P.~J. Davis and P.~Rabinowitz.
\newblock {\em Methods of Numerical Integration}.
\newblock Academic Press, second edition, 1984.


\bibitem{deBoor2001}
C.~de~Boor.
\newblock {\em A Practical Guide to Splines}.
\newblock Applied Mathematical Sciences, vol.~27. Springer, revised edition, 2001.
\newblock \url{https://doi.org/10.1007/978-1-4612-6333-3}

\bibitem{EsmaeiliMilovanovic2014}
S.~Esmaeili and G.~V. Milovanovi\'c.
\newblock A {G}auss--{L}obatto quadrature-based scheme for solving fractional
  differential equations.
\newblock {\em Fractional Calculus and Applied Analysis}, 17(4):1075--1099,
  2014.
\newblock \url{https://doi.org/10.2478/s13540-014-0215-z}

\bibitem{Foerster1993}
K.-J.~F\"orster.
\newblock Survey on stopping rules in quadrature based on Peano kernel methods.
\newblock {\em Supplemento ai Rendiconti del Circolo Matematico di Palermo, Serie II}, 33:311--330, 1993.

\bibitem{Gautschi2000}
W.~Gautschi.
\newblock High-order {G}auss--{L}obatto formulae.
\newblock {\em Numerical Algorithms}, 25:213--222, 2000.
\newblock \url{https://doi.org/10.1023/A:1016689830453}

\bibitem{Gautschi2004}
W.~Gautschi.
\newblock On generating generalized {G}auss--{R}adau and {G}auss--{L}obatto
  formulae.
\newblock {\em BIT Numerical Mathematics}, 44(4):711--720, 2004.
\newblock \url{https://doi.org/10.1007/s10543-004-3812-0}



\bibitem{Freud1971}
G.~Freud.
\newblock {\em Orthogonal Polynomials}.
\newblock Pergamon Press, Oxford, 1971.

\bibitem{Gonnet2012}
P.~Gonnet.
\newblock A review of error estimation in adaptive quadrature.
\newblock {\em ACM Computing Surveys}, 44(4):22, 2012.
\newblock \url{https://doi.org/10.1145/2333112.2333117}

\bibitem{GroeneboomJongbloedWellner2001}
P.~Groeneboom, G.~Jongbloed, and J.~A. Wellner.
\newblock Estimation of a convex function: characterizations and asymptotic theory.
\newblock {\em The Annals of Statistics}, 29(6):1653--1698, 2001.
\newblock \url{https://doi.org/10.1214/aos/1015345958}

\bibitem{JagelsReichel2007}
C.~Jagels and L.~Reichel.
\newblock Szeg\H{o}--{L}obatto quadrature rules.
\newblock {\em Journal of Computational and Applied Mathematics}, 200:116--126, 2007.
\newblock \url{https://doi.org/10.1016/j.cam.2005.12.009}


\bibitem{Hopf1926}
E.~Hopf,
\emph{Elementare Bemerkungen \"uber die L\"osungen partieller Differentialgleichungen zweiter Ordnung vom elliptischen Typus},
Sitzungsber.\ Preuss.\ Akad.\ Wiss.\ Phys.-Math.\ Kl.\ (1926), 147--152.

\bibitem{HunterNikolov1999}
D.~B. Hunter and G.~Nikolov.
\newblock Error term of symmetric {G}auss--{L}obatto quadrature formulas for
  analytic functions.
\newblock {\em Mathematics of Computation}, 69(229):269--282, 2000.
\newblock \url{https://doi.org/10.1090/S0025-5718-99-01078-9}


\bibitem{KarlinStudden1966}
S.~Karlin and W.~J. Studden.
\newblock {\em Tchebycheff Systems: With Applications in Analysis and Statistics}.
\newblock Pure and Applied Mathematics, vol.~15. Interscience, 1966.

\bibitem{KomisarskiWasowicz2017}
A.~Komisarski and S.~W\k{a}sowicz.
\newblock Inequalities between remainders of quadratures.
\newblock {\em Aequationes Mathematicae}, 91:1103--1114, 2017.
\newblock \url{https://doi.org/10.1007/s00010-017-0505-8}

\bibitem{Lyness1969}
J.~N. Lyness.
\newblock Notes on the adaptive Simpson quadrature routine.
\newblock {\em Journal of the ACM}, 16(3):483--495, 1969.
\newblock \url{https://doi.org/10.1145/321526.321537}

\bibitem{NikolovNikolov2024}
G.~Nikolov and P.~Nikolov.
\newblock Modified trapezoidal product cubature rules: definiteness, monotonicity, and a posteriori error estimates.
\newblock {\em Mathematics}, 12(23):3783, 2024.
\newblock \url{https://doi.org/10.3390/math12233783}

\bibitem{NiculescuPersson}
C.~P. Niculescu and L.-E. Persson.
\newblock {\em Convex Functions and Their Applications: A Contemporary Approach}.
\newblock CMS Books in Mathematics. Springer, second edition, 2018.

\bibitem{Popoviciu1944}
T.~Popoviciu,
\emph{Les fonctions convexes},
Actualit\'es Sci.\ Ind.\ 992,
Hermann, Paris, 1944.

\bibitem{RobertsVarberg}
A.~W. Roberts and D.~E. Varberg.
\newblock {\em Convex Functions}.
\newblock Academic Press, New York, 1973.

\bibitem{Szego1939}
G.~Szeg{\H o},
\emph{Orthogonal Polynomials},
Amer.\ Math.\ Soc.\ Colloq.\ Publ.\ Vol.~23,
American Mathematical Society, 1939.

\bibitem{Szostok2024}
T.~Szostok.
\newblock A generalization of a theorem of Brass and Schmeisser.
\newblock {\em Numer. Math.}, 156(5):1915--1925, 2024.
\newblock \url{https://doi.org/10.1007/s00211-024-01434-7}

\bibitem{Wasowicz2010Extremalities}
S.~W\k{a}sowicz,
\emph{On some extremalities in the approximate integration},
Math.\ Inequal.\ Appl.\ \textbf{13} (2010), no.~1, 165--174.
\newblock \url{https://doi.org/10.7153/mia-13-13}

\bibitem{Was20}
S.~W\k{a}sowicz,
\newblock On a certain adaptive method of approximate integration and its stopping criterion, \emph{Aequationes Math.}\ 94(5):887--898, 2020.
\newblock \url{https://doi.org/10.1007/s00010-020-00719-0}

\bibitem{Was26_arxiv}
S.~W\k{a}sowicz.
\newblock Adaptive integration of 5-convex and 5-concave functions.
\newblock Preprint, arXiv:2601.16796, 2026. \url{https://arxiv.org/abs/2601.16796}


\bibitem{Williams2006}
P.~Williams.
\newblock Hermite--{L}obatto and {G}auss--{L}obatto methods for optimal control
  problems.
\newblock {\em ANZIAM Journal}, 47:C792--C805, 2006.

\end{thebibliography}

\end{document}